\newtheorem{theorem}{Theorem}[section]
\newtheorem{lemma}[theorem]{Lemma}
\newtheorem{corollary}[theorem]{Corollary}
\newtheorem{proposition}[theorem]{Proposition}
\theoremstyle{definition}
\newtheorem{definition}[theorem]{Definition}
\theoremstyle{remark}
\newtheorem{remark}[theorem]{Remark}
\newtheorem{conjecture}[theorem]{Conjecture}
\newcommand{\sshf}[1]{\mathscr{O}_{#1}}
\newcommand{\shf}[1]{\mathscr{#1}}
\newcommand{\prj}[1]{\mathbb{P}^{#1}}
\newcommand{\iso}{\simeq}
\newcommand{\ses}[3]{0\rightarrow#1\rightarrow#2\rightarrow#3\rightarrow{0}}
\newcommand{\paren}[1]{\left(#1\right)}
\numberwithin{equation}{section}
\begin{document}
\allowdisplaybreaks
\title[projective normality of cyclic coverings]{On the projective normality of cyclic coverings over a rational surface}

%    Information for first author
\author{Lei Song}
%\author{}

%    Address of record for the research reported here
\address{Department of Mathematics, University of California, Riverside, 900 University Ave., Riverside, CA 92521, USA}

%    Current address
\curraddr{School of Mathematics,
Sun Yat-sen University, No. 135 Xingang Xi Road, Guangzhou, Guangdong 510275, P.R. China}
\email{songlei3@mail.sysu.edu.cn}
%\address{}
%\email{}
%    \thanks will become a 1st page footnote.
%\thanks{The first author was supported in part by NSF Grant \#000000.}

\subjclass[2010]{Primary 14C20, 14J26, 14E20; Secondary 14N05}

%\date{December 20, 2016}

\dedicatory{}

\keywords{projective normality, adjoint divisor, cyclic covering, rational surface}

\begin{abstract}
Let $S$ be a rational surface with $\dim|-K_S|\ge 1$ and let $\pi: X\rightarrow S$ be a ramified cyclic covering from a nonruled smooth surface $X$. We show that for any integer $k\ge 3$ and ample divisor $A$ on $S$, the adjoint divisor $K_X+k\pi^*A$ is very ample and normally generated. Similar result holds for minimal (possibly singular) coverings.
\end{abstract}

\maketitle

\section{Introduction}
 Given a complex projective variety $X$ and a base point free line bundle $L$ on $X$, $L$ is called \textit{normally generated} if the natural map
\begin{equation*}
   H^0(X, L)\otimes_{\mathbb{C}} H^0(X, L^{\otimes i})\rightarrow H^0(X, L^{\otimes {(i+1)}})
\end{equation*}
surjects for all $i>0$. A Cartier divisor $D$ is \textit{normally generated} if the associated line bundle $\sshf{X}(D)$ is base point free and normally generated. If $L$ is very ample and normally generated, then $X$ is embedded by the complete linear system $|L|$ as a projectively normal variety. The concept of normal generation was introduced by Mumford in \cite{Mumford69}.

A classical theorem of Castelnuovo, Mattuck, and Mumford says that on a smooth projective curve of genus $g$, any line bundle with degree at least $2g+1$ is normally generated. Also by the work of Reider \cite{Reider88}, it is well known that for a smooth projective surface $X$ and an ample divisor $A$, the adjoint divisor $K_X+rA$ is very ample provided $r\ge 4$. The considerations naturally lead to the conjecture, commonly attributed to S.~Mukai.

\begin{conjecture}[{cf.~\cite[Conj.~4.2]{EinLazarsfeld93}}]\label{Conj}
Let $X$ be a smooth projective surface and $A$ an ample divisor on $X$. Then for every integer $k\ge 4$, $K_X+kA$ is normally generated.
\end{conjecture}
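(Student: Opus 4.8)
The plan is to prove normal generation through the standard syzygy-bundle reformulation, after separating off the very ampleness statement. Write $L=K_X+kA$. For every $k\ge 4$ the very ampleness, and in particular the base point freeness, of $L$ is already furnished by Reider's theorem \cite{Reider88}, so the entire content of the conjecture is the surjectivity of the multiplication maps
\begin{equation*}
H^0(X,L)\otimes H^0(X,L^{\otimes i})\longrightarrow H^0(X,L^{\otimes(i+1)}),\qquad i\ge 1.
\end{equation*}
I would treat this whole family of maps at once by means of the kernel bundle of $L$.

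Concretely, since $L$ is globally generated one forms the vector bundle $M_L$ of rank $h^0(X,L)-1$ defined by
\begin{equation*}
0\longrightarrow M_L\longrightarrow H^0(X,L)\otimes\sshf{X}\longrightarrow L\longrightarrow 0.
\end{equation*}
Twisting by $L^{\otimes i}$ and passing to the long exact cohomology sequence, the $i$-th multiplication map above is surjective as soon as $H^1(X,M_L\otimes L^{\otimes i})=0$. Thus the conjecture reduces to a single family of vanishing statements for twists of one bundle, namely $H^1(X,M_L\otimes L^{\otimes i})=0$ for all $i\ge 1$.

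The engine for these vanishings would be the Bogomolov instability theorem together with the Reider-type analysis that underlies very ampleness. By Serre duality $H^1(X,M_L\otimes L^{\otimes i})\cong H^1(X,M_L^\vee\otimes(K_X-iL))^\vee$, and dualizing the defining sequence exhibits $M_L^\vee$ as a globally generated, hence nef, quotient of a trivial bundle. The strategy is to show that, for $k\ge 4$, the twisted bundle $M_L\otimes L^{\otimes i}$ is sufficiently positive that a nonzero class in $H^1$ would force, via the instability theorem, a destabilizing effective divisor $D$ on $X$ whose numerical invariants $D^2$ and $D\cdot A$ violate the Hodge index inequality. The case $i=1$, i.e.\ the surjectivity of $\operatorname{Sym}^2 H^0(X,L)\to H^0(X,L^{\otimes 2})$, is the most delicate and would be isolated and argued first, the cases $i\ge 2$ being easier because of the additional positivity carried by $L^{\otimes i}$.

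The hard part, and the reason this statement is a conjecture rather than a theorem, is to make the Reider-type estimate uniform in $X$ and $A$ with the sharp threshold $k\ge 4$. The discriminant and Chern-class bounds controlling the destabilizing subsheaves of $M_L$ degrade when $A^2$ is small or when $X$ carries special configurations of curves, and it is precisely the first multiplication map on such surfaces that escapes the argument. This is why the present paper does not attempt the conjecture in full generality but instead works over cyclic coverings of rational surfaces, where the covering morphism and adjunction on the base supply the extra positivity needed to close the estimates, indeed at the improved threshold $k\ge 3$ for pulled-back polarizations.
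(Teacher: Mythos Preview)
The statement you are attempting to prove is Conjecture~\ref{Conj}, which the paper explicitly presents as an \emph{open} conjecture attributed to Mukai; the paper does not contain a proof of it, and indeed says that it is ``open in general, especially for surfaces of general type.'' So there is no proof in the paper to compare your proposal against.

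More to the point, your proposal is not a proof either, and you say so yourself. The syzygy-bundle reformulation and the reduction to $H^1(M_L\otimes L^{\otimes i})=0$ are standard and correct, and the idea of attacking this vanishing via Bogomolov instability is reasonable, but the step where you claim that a nonzero class in $H^1$ would force a destabilizing divisor violating Hodge index is precisely where the argument breaks down: the discriminant bounds one obtains from Bogomolov's theorem are not strong enough to rule out all destabilizing configurations at the sharp threshold $k\ge 4$, uniformly over all surfaces and polarizations. You acknowledge this in your last paragraph. What you have written is therefore a heuristic outline of why the conjecture is plausible and why it is hard, not a proof.

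If this were submitted as a proof of a theorem, the genuine gap would be exactly the one you name: there is no argument given that the instability analysis actually closes for $i=1$ and $k=4$ on an arbitrary surface. Since the statement is a conjecture, the honest thing to do is to present your write-up as motivation or a discussion of the expected mechanism, not as a proof.
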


The conjecture is true in many interesting cases by many people's work, but open in general, especially for surfaces of general type. A uniform approach to the problem for algebraic surfaces seems to be elusive by far.

The present paper is a continuation of \cite{RS16}. Blending the geometry of anticanonical rational surfaces and techniques from birational geometry, we extend the main result in \cite{RS16} to arbitrary ramified cyclic coverings over a rational surface $S$ with $\dim |-K_S|\ge 1$; in particular, we remove the minimality assumption for the coverings provided they are smooth.

The class of surfaces of general type which have covering structures over rational surfaces is ubiquitous in the geography and moduli of surfaces of general type. For instance, by \cite{Horikawa76}, for a minimal surface of general type satisfying $K^2_X=2p_g(X)-4$, the canonical system gives rise to a double covering structure of the surface or its canonical model (with mild singularities) over $\mathbb{P}^2$ or some rational ruled surface $\mathbb{F}_e$.

%The following result presents more evidence for \ref{Conj}.
\begin{corollary}
Let $X$ be a nonruled smooth surface and $\pi: X\rightarrow S$ be a ramified cyclic covering over a rational surface $S$ with $\dim |-K_S|\ge 1$. Then for every $k\ge 3$ and ample divisor $A$ on $S$, $K_X+k\pi^*A$ is very ample and normally generated.
\end{corollary}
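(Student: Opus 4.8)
The plan is to treat very ampleness and normal generation separately; write $n:=\deg\pi$, $A_X:=\pi^*A$, $D:=kA_X$ and $L:=K_X+D$.

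\emph{Very ampleness.} Since $\pi$ is finite, $A_X$ is ample, hence so is $D$, with $D^2=k^2n\,(A^2)\ge 9\cdot 2\cdot 1=18$; moreover for every irreducible curve $C\subset X$ one has $D\cdot C=k\,(A\cdot\pi_*C)\in k\mathbb{Z}_{>0}$, so $D\cdot C\ge 3$. By Reider's theorem a nef divisor with $D^2\ge 10$ and $D\cdot C\ge 3$ for every curve makes $K_X+D$ very ample: the effective divisors $E$ occurring in Reider's exceptional list all satisfy $D\cdot E\le 2$, the Hodge index inequality $D^2E^2\le(D\cdot E)^2$ ruling out the borderline case $D\cdot E=3$, $E^2=1$ once $D^2\ge 10$. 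Hence $K_X+kA_X$ is very ample for every $k\ge 3$; in particular $L$ is ample, so $jL-K_X=(j-1)L+kA_X$ is ample for every $j\ge 1$, and Kawamata--Viehweg gives $H^i(X,jL)=0$ for all $i>0$ and all $j\ge 1$. This vanishing will be used throughout the rest of the argument.

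\emph{Normal generation.} Here I would pass down to $S$ by exploiting the cyclic structure. Let $\mathscr{L}$ be the line bundle defining $\pi$, so $\mathscr{L}^{\otimes n}\cong\mathscr{O}_S(B)$ with $B$ the branch divisor and $\omega_X\cong\pi^*\big(\omega_S\otimes\mathscr{L}^{\otimes(n-1)}\big)$. Then $\mathscr{O}_X(jL)\cong\pi^*\mathscr{O}_S\big(jK_S+j(n-1)\mathscr{L}+jkA\big)$ is a pullback, so
\[
H^0(X,jL)\;\cong\;\bigoplus_{i=0}^{n-1}H^0\!\big(S,\;jK_S+\big(j(n-1)-i\big)\mathscr{L}+jkA\big),
\]
and, eigenspace by eigenspace, the multiplication map $H^0(X,L)\otimes H^0(X,jL)\to H^0(X,(j+1)L)$ becomes a multiplication map between such twisted pluri-adjoint linear systems on the rational surface $S$, composed in the ``carry'' eigenspaces with multiplication by the section $s_B\in H^0(S,\mathscr{O}_S(B))$. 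Thus $L$ is normally generated once one knows that each $H^0\!\big(S,\,mK_S+c\mathscr{L}+mkA\big)$ is spanned by products of the weight-one pieces $H^0\!\big(S,\,K_S+c'\mathscr{L}+kA\big)$, up to these $s_B$-corrections. I would prove this by a curve-section argument on $S$: restrict to a general member $C_0\in|-K_S|$ (using $\dim|-K_S|\ge 1$ to move it), an anticanonical curve of arithmetic genus $1$; establish normal generation of the restricted bundles on $C_0$ via the Castelnuovo--Mumford theorem for curves once their degrees are controlled; and lift through the restriction sequences, whose exactness on global sections is governed by Kawamata--Viehweg and Serre duality on $S$ together with the vanishing $H^i(X,jL)=0$ transported back through $\pi$. (Equivalently one may run the curve-section argument on $X$ with a general smooth connected member $C$ of a suitable pullback linear system.)

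\emph{Main obstacle.} The difficulty is that $\mathscr{L}=\tfrac1n B$ need not be nef, so the twisted pluri-adjoint bundles $mK_S+c\mathscr{L}+mkA$ on $S$ are not manifestly adjoint-positive and cannot be fed into Kawamata--Viehweg, Reider, or the curve-section bounds on $C_0$ without further input. Two facts must be brought to bear. First, $X$ is nonruled, i.e. $\kappa(X)\ge 0$: since $K_X=\pi^*\big(K_S+(n-1)\mathscr{L}\big)$ this forces $K_S+(n-1)\mathscr{L}$ (equivalently $\tfrac1n\pi_*K_X$) to be pseudoeffective, and big in the cases that matter, pinning down the positivity of $\mathscr{L}$ relative to the anti-effective class $-K_S$. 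Second, $\dim|-K_S|\ge 1$ lets one move $C_0$ off any bad curves and absorb the negativity of $\mathscr{L}$ into the anticanonical pencil; this is exactly where the hypothesis $k\ge 3$ (rather than $k\ge 2$) is used, paralleling its role in the Reider step. Finally, since $X$ is only assumed nonruled, I would reduce to the case of the minimal, possibly canonically singular, covering -- the case handled by the paper's main theorem -- by contracting the $K_X$-negative $(-1)$-curves, which are vertical for $\pi$ in a controlled way and along which the relevant sheaves are trivial, so that the pertinent cohomology is unchanged; this birational step is what upgrades the minimal-covering result to the stated corollary.
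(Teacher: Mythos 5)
There is a genuine gap. The corollary is designed to be a three-line application of Theorem \ref{main result}: replace $A$ by $A'=kA$, note that $(kA)^2=k^2A^2\ge 9\ge 7$ and $(kA)\cdot C=k(A\cdot C)\ge 3$ for every curve $C$, and observe that nonruledness of $X$ gives $h^0(X,mK_X)>0$ for some $m\ge 1$, whence $0\neq H^0(X,mK_X)\cong\bigoplus_{i=0}^{r-1}H^0(S,mK_S+(m(r-1)-i)B)$ produces the nonvanishing $H^0(S,mK_S+lB)\neq 0$ with $l\ge 1$ required by condition (1) of the theorem (the theorem already covers smooth \emph{nonminimal} coverings, precisely so that this corollary needs no minimal-model reduction). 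You instead set out to reprove the whole statement. Your Reider computation for very ampleness is fine (and the paper notes this alternative), but the normal generation section is a plan rather than a proof: the assertion that each $H^0(S,mK_S+c\mathscr{L}+mkA)$ is ``spanned by products of the weight-one pieces up to $s_B$-corrections'' is exactly the surjectivity of the multiplication maps $f_1,f_2$ that occupies Sections 3--4 of the paper, and nothing in your sketch establishes it. In particular, your proposed device --- restrict to a general $C_0\in|-K_S|$ and ``absorb the negativity of $\mathscr{L}$ into the anticanonical pencil'' --- does not engage with the actual difficulty: $B$ (your $\mathscr{L}$) has a Zariski decomposition $B=P+N$ with a genuinely negative part, the needed $H^1$-vanishings fail for fractional perturbations of nef divisors (this is why the paper abandons Harbourne's vanishing for Kawamata--Viehweg and builds the auxiliary rational cycle $Z=F+\left\lfloor N\right\rfloor$), and the case $K_S^2=0$, where $|-K_S|$ gives an elliptic fibration and $B$ restricts to a degree-zero (possibly torsion) bundle on the fibres, requires a separate argument. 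None of this is present or replaced by an alternative.

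The closing reduction step is also both unnecessary and unjustified. Contracting a $(-1)$-curve $E\subset X$ via $\sigma:X\to X'$ does not leave the relevant cohomology unchanged: $K_X+k\pi^*A$ is not a pullback from $X'$ unless $(k\pi^*A)\cdot E=0$, which is impossible since $\pi^*A$ is ample; nor need $X'$ inherit a cyclic covering structure over the same $S$ (one would have to modify $S$ as well). So the claim that ``the pertinent cohomology is unchanged'' along this contraction is false as stated, and the route back to the minimal case does not close. The correct fix is simply to invoke Theorem \ref{main result}, condition (1), with $kA$ in place of $A$, as above.
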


This immediately follows from the main theorem.

\begin{theorem}\label{main result}
Let $S$ be a rational surface with $\dim|-K_S|\ge 1$. Let $\pi: X\rightarrow S$ be a ramified $r$-cyclic covering of $S$ with the branch locus $\Gamma\in |rB|$ for some divisor $B$. Suppose that one of the following conditions holds
\begin{enumerate}
  \item $X$ is smooth and $H^0(S, \sshf{S}(mK_S+lB))\neq 0$ for some integers $m, l\ge 1$ \footnote{Since $-K_S$ is effective in our setting, the condition that $H^0(\sshf{S}(mK_S+lB))\neq 0$ for some integers $l, m\ge 1$ is equivalent to the condition that $H^0(\sshf{S}(K_S+nB))\neq 0$ for some integer $n\ge 1$. But we will keep the preceding one in the presentation, since it follows more immediately from the nonruledness of $X$.}.
  \item $K_X$ is nef. \footnote{$X$ is possibly singular, but in any event Cohen-Macaulay and Gorenstein.}
\end{enumerate}
Then for any divisor $A$ on $S$ with the property that $A^2\ge 7$ and $A\cdot C\ge 3$ for any curve $C$ on $S$, the divisor $K_X+\pi^*A$ is very ample and normally generated.
\end{theorem}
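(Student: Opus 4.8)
The plan is to transport the question to the base $S$ via the covering, to dispose of very ampleness with Reider's theorem, and to reduce normal generation to vanishing statements on the rational surface $S$ which are then handled through the anticanonical pencil; this last step, in the spirit of \cite{Song16}, is where the real work lies.

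\textbf{Reduction to $S$.} Relative duality for the finite flat map $\pi$ gives $\omega_X=\pi^*\bigl(\omega_S\otimes\sshf{S}((r-1)B)\bigr)$, which also holds in the Gorenstein (possibly singular) case (2); hence $K_X+\pi^*A=\pi^*M$ with $M:=K_S+(r-1)B+A$. Combined with $\pi_*\sshf{X}=\bigoplus_{i=0}^{r-1}\sshf{S}(-iB)$ --- the algebra structure coming from the branch section $s\in H^0(S,\sshf{S}(rB))$ cutting out $\Gamma$ --- the projection formula yields $H^q(X,\pi^*N)=\bigoplus_{i=0}^{r-1}H^q(S,N-iB)$ for every divisor $N$ on $S$, with the section multiplication respecting the $\mathbb{Z}/r$-grading. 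Finally $A^2\ge7>0$ and $A\cdot C\ge3>0$ for every curve force $A$ ample (Nakai--Moishezon); so $\pi^*A$ is nef with $(\pi^*A)^2=rA^2\ge14$ and $\pi^*A\cdot C=A\cdot\pi_*C\ge3$ for every irreducible $C\subset X$.

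\textbf{Very ampleness.} In case (1), $X$ is smooth and Reider's theorem applies to the nef divisor $D=\pi^*A$: since $D^2\ge14\ge10$ and $D\cdot C\ge3$ for all curves, no Reider exceptional divisor (all of which satisfy $D\cdot E\le2$) can exist, so $K_X+\pi^*A$ is very ample. In case (2), $X$ has canonical singularities (cf.~\cite{Song16}), so on a minimal resolution $\rho\colon\widetilde X\to X$ we have $K_{\widetilde X}=\rho^*K_X$ and $\rho^*\pi^*A$ is nef with the same numerics off the $\rho$-exceptional $(-2)$-curves; Reider on $\widetilde X$ then shows $\rho^*(K_X+\pi^*A)$ is globally generated and separates points and tangent vectors away from the exceptional locus, which is exactly very ampleness of $K_X+\pi^*A$ on $X$.

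\textbf{Normal generation.} It remains to prove the maps $H^0(X,\pi^*M)\otimes H^0(X,\pi^*(nM))\to H^0(X,\pi^*((n+1)M))$ are onto for all $n\ge1$. By the $\mathbb{Z}/r$-decomposition this says: for each $0\le k\le r-1$ the space $H^0(S,(n+1)M-kB)$ is spanned by the images of $H^0(S,M-iB)\otimes H^0(S,nM-jB)$ over pairs $i+j\equiv k\pmod r$ (post-composed with $\cdot\,s$ when $i+j\ge r$). I would attack this by the standard kernel-sheaf (Castelnuovo--Mumford) argument on $S$: writing $M-iB=(K_S+A)+(r-1-i)B$ and splitting off the base-point-free divisor $K_S+A$ (Reider again: $A^2\ge7>5$, $A\cdot C\ge3$), surjectivity follows from vanishings $H^1(S,\,\cdot\,)=0$ and $H^2(S,\,\cdot\,)=0$ for adjoint-type twists, the governing $H^1$ being of $nM-jB=K_S+\bigl((n-1)M+A+(r-1-j)B\bigr)$ (the $H^2$-terms are dealt with by Serre duality and ampleness of $A$). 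For Kawamata--Viehweg one wants $(n-1)M+A+(r-1-j)B$ big and nef, and this is exactly where the hypotheses come in: in (2), $K_S+(r-1)B$ is nef, so $M$ is ample; in (1), $H^0(S,K_S+nB)\neq0$ for some $n\ge1$ (the footnote), which together with $-K_S$ effective bounds $B$ from below. When such a divisor is not manifestly nef, restrict along a curve $C\in|-K_S|$, which is Gorenstein of arithmetic genus $1$: the sequence $0\to\sshf{S}(E+K_S)\to\sshf{S}(E)\to\sshf{C}(E)\to0$ lets one peel off copies of $-K_S$, on $C$ every line bundle of degree $\ge3=2p_a(C)+1$ is normally generated with no higher cohomology, and moving $C$ in the pencil (this uses $\dim|-K_S|\ge1$) kills base loci. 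Iterating this descent, interleaved with the grading, yields the vanishings and hence normal generation.

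\textbf{The main obstacle.} Very ampleness is essentially bookkeeping with Reider's bounds. The difficulty is concentrated in the last step: because $B$ is only controlled up to $-K_S$ in (1), and only through the combination $K_S+(r-1)B$ in (2), the divisors $nM-jB-K_S$ are not visibly big and nef; extracting the $H^1$-vanishings on $S$ then genuinely requires the anticanonical-pencil machinery of \cite{Song16}, and the extra subtlety here is to run it compatibly with the $\mathbb{Z}/r$-grading introduced by the covering.
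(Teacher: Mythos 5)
Your reduction to $S$, the $\mathbb{Z}/r$-graded decomposition of cohomology, and the use of Reider for very ampleness in the smooth case all match the paper's setup, and you correctly locate the difficulty in the $H^1$-vanishings for the $B$-twisted adjoint divisors. But the proposal stops exactly where the paper's actual work begins, and the descent you sketch would fail. The paper's key device is the Zariski decomposition $B=P+N$ together with the auxiliary rational cycle $Z=F+\left\lfloor N\right\rfloor$ (built from the fixed part $F$ of $|-K_S|$ and the negative part of $B$): one proves $H^1(\sshf{S}(B-Z))=0$ by Kawamata--Viehweg applied to $K_S+(P+M)+\{N\}$, proves $H^1(\sshf{Z})=0$ via Artin's criterion, and then splits the multiplication map $f_2$ by a snake-lemma diagram into a piece on $S$ (Green's $H^0$-lemma on a smooth curve in $|2K_S+kB+A|$) and a piece on the cycle $Z$. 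None of this appears in your outline; you defer to ``the anticanonical-pencil machinery of \cite{Song16},'' but that machinery does not extend verbatim here because Harbourne's vanishing does not survive perturbation by the fractional divisors that the Zariski decomposition introduces. More concretely, your proposed iteration --- restrict to $C\in|-K_S|$ and use that line bundles of degree $\ge 2p_a(C)+1=3$ are normally generated --- breaks down in the extremal case where $|-K_S|$ induces an elliptic fibration and $B\cdot(-K_S)=0$: there $B|_E$ has degree $0$ on the anticanonical curve $E$, the threshold $\ge 3$ is nowhere near met, and the paper has to run a separate analysis splitting into $B|_E$ torsion versus trivial (using Grauert, torsion-freeness of $R^i\pi_*$, and a dedicated rational cycle $\Gamma$ with $B\sim mK_S+\Gamma$, $m<0$). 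Also, the anticanonical curve may be reducible and non-reduced, so the genus-one normal generation statement you invoke does not apply as stated.

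Two smaller issues. First, in case (2) your Reider-on-the-resolution argument is not immediate: $\rho^*\pi^*A$ meets every $\rho$-exceptional $(-2)$-curve $E$ with $\rho^*\pi^*A\cdot E=0$ and $E^2=-2$, which is precisely one of Reider's exceptional configurations, so Reider does not directly give very ampleness of $K_{\widetilde X}+\rho^*\pi^*A$; the paper sidesteps this by deducing very ampleness from normal generation together with ampleness and base-point-freedom (Mumford). Second, even the ``easy'' range $n\ge 3$ via Castelnuovo--Mumford regularity requires knowing that $K_S+kB+A$ is nef (indeed ample and normally generated) for all intermediate $0<k\le r-1$, which the paper establishes by writing it as a convex combination of $K_S+A$ and $K_S+(r-1)B+A$ (resp.\ of $K_S+(r-1)B$, $K_S+A$ and $A$ in case (2)); your outline asserts ampleness of $M$ but never of the intermediate twists $M-iB$. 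As it stands the proposal is a correct framing with the central construction missing.
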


Note that if $K_X$ is nef, then $K_S+(r-1)B$ is nef, and hence $H^0(\sshf{S}(K_S+(r-1)B))\neq 0$ by the Riemann-Roch theorem.

The idea of the proof is similar to that of \cite{RS16}, that is, one needs to find an auxiliary cycle on $S$ to deal with the potentially low positivity of $B$ in establishing the surjectivity of the multiplication maps
\begin{eqnarray*}
    H^0(S, \sshf{S}(K_S+(k+1)B+A))&\otimes& H^0(S, \sshf{S}(K_S+kB+A))\\
    &\rightarrow& H^0(S, \sshf{S}(2K_S+(2k+1)B+2A))
\end{eqnarray*}
for various integer $k$. An ingredient in the present paper is an application of Zariski decomposition to the divisor $B$, which enables us to define an auxiliary rational cycle on $S$ (see (\ref{auxiliary cycle}) and Theorem \ref{Artin thm} for notion of rational cycle) that works in the argument for all situations except for the case when $|-K_S|$ induces an elliptic fibration of $S$ over $\prj{1}$. This extremal case is treated separately in the end of Section 4. It may be worth pointing out that the auxiliary cycle is not reduced in general; and an analogous situation as in the theorem where the base surface $S$ is K3 or abelian is arguably less involved.

In the situation of double coverings, which was studied in \cite{RS16}, Harbourne's vanishing theorem \cite[Thm.~3.1]{Harbourne97} for nef divisors on anticanonical rational surfaces is extensively used. By contrast, for arbitrary cyclic coverings, $\mathbb{Q}$-divisors appear naturally, and Harbourne's vanishing unfortunately does not extend to the perturbations of nef divisors by fractional divisors. For this reason, we instead rely more on the Kawamata--Viehweg vanishing, this makes the choice of auxiliary cycle trickier, though.

{\it Acknowledgments.}
I am grateful to Shin-Yao Jow, Bangere Purnaprajna, and Yuan Wang for very helpful discussions and communications. Special thanks goes to Brian Harbourne, whose illuminating examples clarify my questions on anticanonical rational surfaces. Last but not least, I would like to thank the referee for the suggestions, which improve the exposition of the article.

\section*{Conventions and notation}
We work over the field of complex numbers $\mathbb{C}$. All surfaces are projective and smooth unless otherwise specified. A curve or cycle on a surface means a nonzero effective divisor on the surface. Given a sheaf $\shf{F}$ of $\sshf{X}$-modules, the notation $h^i(X, \shf{F})$ denotes the dimension of the cohomology group $H^i(X, \shf{F})$. When the context is clear, we simply write $H^i(\shf{F})$ for $H^i(X, \shf{F})$ and similarly for $h^i(\shf{F})$. We say a divisor class $D$ is \textit{effective} if $h^0(\sshf{S}(D))>0$. We do not distinguish between a divisor and its associated line bundle. For $\mathbb{Q}$-divisors, we will also interchangeably use the symbol $\sim$ for linear equivalence and $\equiv$ for numerical equivalence, as the two equivalences coincide on regular surfaces. For a $\mathbb{Q}$-divisor $D$, $\left\lfloor D\right\rfloor$ ($\left\lceil D\right\rceil$, resp.) denotes the round-down (round-up, resp.) of $D$, and $\{D\}$ denotes the fractional part of $D$.

\section{Preliminaries}
Given a surface $S$, let $K_S$ denote a canonical divisor of $S$. A rational surface $S$ is called \textit{anticanonical} if its anticanonical system $|-K_S|$ is nonempty. Examples of anticanonical rational surfaces include del Pezzo surfaces, blowing-ups of $\mathbb{F}_e$ along at most 8 points, and proper toric surfaces.

We collect some facts on anticanonical rational surfaces which will be needed in the subsequent sections.
\subsection{Adjoint divisors on anticanonical rational surfaces}
First we recall two properties of adjoint divisors.

\begin{proposition}[{\cite[Prop.~3.7]{RS16}}]\label{inequality1}
Let $S$ be an anticanonical rational surface and $A$ be a divisor on $S$ with the property that $A\cdot C\ge 3$ for any curve $C$ on $S$. Then $(K_S+A)\cdot(-K_S)\ge 3$ unless
\begin{enumerate}
  \item $S=\prj{2}$, $A=\sshf{\prj{2}}(3)$,
  \item $K^2_S=1, A=-3K_S$.
\end{enumerate}
In case $(2)$, $-K_S$ is ample and has a unique base point.
\end{proposition}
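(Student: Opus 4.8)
\emph{Proof proposal.} The plan is to write $(K_S+A)\cdot(-K_S)=A\cdot(-K_S)-K_S^2$ and bound $A\cdot(-K_S)$ from below by means of effective decompositions of the anticanonical class. The basic observation is that, since $A\cdot C\ge 3$ for every curve $C$, one has $A\cdot D\ge 3$ for every nonzero effective divisor $D$; hence if $-K_S\sim D_1+\cdots+D_m$ with each $D_j$ a nonzero effective divisor, then $A\cdot(-K_S)=\sum_j A\cdot D_j\ge 3m$. As $-K_S$ itself is nonzero and effective, $A\cdot(-K_S)\ge 3$ always, and therefore $(K_S+A)\cdot(-K_S)\ge 3-K_S^2\ge 3$ whenever $K_S^2\le 0$. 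So I may assume $K_S^2\ge 1$, i.e.\ $S$ is obtained by blowing up $\prj 2$ or a Hirzebruch surface $\mathbb{F}_e$ at $9-K_S^2$ (possibly infinitely near) points.

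For $2\le K_S^2\le 8$ I would exhibit, in each case, a decomposition of $-K_S$ into at least $1+\lceil K_S^2/3\rceil$ nonzero effective pieces, which forces $A\cdot(-K_S)\ge 3+K_S^2$ and hence the inequality. Concretely, on $\mathbb{F}_e$ one uses $-K_{\mathbb{F}_e}\sim 2C_0+(e+2)f$, a sum of $e+4\ge 4$ pieces; on a blow-up $S\to\prj 2$ at $n=9-K_S^2\in\{2,\dots,7\}$ points one uses combinations of the (always effective) classes $h-e_i-e_j$, $e_k$, $h$, and $2h-e_{i_1}-\cdots-e_{i_5}$, for instance $-K_S\sim(h-e_1-e_2)+(h-e_3-e_4)+(h-e_5-e_6)$ for $n=6$, or $(2h-e_1-\cdots-e_5)+(h-e_6-e_7)$ for $n=7$, or $(h-e_1)+(h-e_2)+(h-e_1-e_2)+e_1+e_2$ for $n=2$; for the surfaces obtained by blowing up an $\mathbb{F}_e$ one pulls back such a decomposition of $-K_{\mathbb{F}_e}$ and keeps track of multiplicities at the centers of the blow-ups, which only increases the number of pieces. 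For $K_S^2=9$, i.e.\ $S=\prj 2$, the hypothesis forces $A=\sshf{\prj 2}(a)$ with $a\ge 3$, and then $(K_S+A)\cdot(-K_S)=3(a-3)\ge 3$ unless $a=3$, which is exactly case (1).

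The remaining and most delicate case is $K_S^2=1$. If $-K_S$ is reducible it splits into at least two nonzero effective pieces and $A\cdot(-K_S)\ge 6$; this occurs in particular when $S$ is not a del Pezzo surface, for then $-K_S$ is not ample and, since $(-K_S)^2=1>0$, Nakai's criterion produces a curve $C$ with $(-K_S)\cdot C\le 0$, from which one extracts $-K_S\sim C+(-K_S-C)$ with both summands nonzero and effective (via Riemann--Roch, using that $-K_S$ is nef in the borderline case $(-K_S)\cdot C=0$). If instead $S$ is a del Pezzo surface of degree $1$, then $-K_S$ is ample, $|-K_S|$ is a pencil with a single base point, and every anticanonical divisor is irreducible and reduced (these standard facts also give the last assertion of the Proposition), so the decomposition trick genuinely fails and $A\cdot(-K_S)\ge 3$ with equality possible. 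Assume the extremal value $A\cdot(-K_S)=3$ and write $A=-3K_S+A'$ with $A'=A+3K_S\in K_S^{\perp}$. The lattice $K_S^{\perp}$ is the negative-definite $E_8$ root lattice, and $C\mapsto C+K_S$ is a bijection from the $240$ $(-1)$-curves of $S$ (all effective and irreducible) onto the $240$ roots of $K_S^{\perp}$. For a $(-1)$-curve $C$ with associated root $\rho=C+K_S$ one computes $A\cdot C=A'\cdot\rho+3\ge 3$, hence $A'\cdot\rho\ge 0$; since $-\rho$ is also a root, $A'\cdot\rho=0$ for every root, so $A'$ is orthogonal to the nondegenerate lattice $K_S^{\perp}$ and $A'=0$. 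Thus $A=-3K_S$, which is case (2).

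I expect the last step to be the main obstacle: isolating the degree-one del Pezzo surfaces from the other surfaces with $K_S^2=1$, and then pinning down the extremal divisor through the root-system structure of $K_S^{\perp}\cong E_8$ --- in particular checking that all $240$ $(-1)$-curve classes are effective and that the correspondence with the roots of $K_S^{\perp}$ is as claimed (Manin's theory of del Pezzo surfaces). Everything else reduces to bookkeeping with the explicit anticanonical decompositions above, together with the elementary fact that $A\cdot D\ge 3$ for every nonzero effective $D$.
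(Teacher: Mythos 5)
First, a caveat: this paper does not prove Proposition \ref{inequality1} at all --- it is imported verbatim from \cite[Prop.~3.7]{Song16} --- so I can only judge your argument on its own terms. Your overall strategy (write $(K_S+A)\cdot(-K_S)=A\cdot(-K_S)-K_S^2$ and lower-bound $A\cdot(-K_S)$ by $3m$ using a decomposition of $-K_S$ into $m$ nonzero effective classes) is sound, and most of it checks out: the reduction to $K_S^2\ge 1$ is immediate; the $\prj{2}$ case is correct; the explicit decompositions on blow-ups of $\prj{2}$ all work because each of the classes $h-e_i-e_j$, $h-e_i$, $e_k$, $h$, $2h-e_{i_1}-\cdots-e_{i_5}$ has $\chi\ge 1$ and vanishing $h^2$, hence is effective in every configuration, including infinitely near points. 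Your treatment of $K_S^2=1$ is complete and correct: the non-ample case splits $-K_S$ into two nonzero effective pieces exactly as you say, and the degree-one del Pezzo case is pinned down cleanly by the $E_8$ root computation ($A\cdot C=3+A'\cdot\rho$ over all $240$ exceptional classes forces $A'=0$). Contrary to your own worry, that is not the weak point --- the facts you need there (all $240$ classes with $C^2=C\cdot K_S=-1$ are irreducible curves, and $\rho\mapsto\rho$ spans $K_S^\perp\cong E_8$) are standard.

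The genuine gap is the sentence ``for the surfaces obtained by blowing up an $\mathbb{F}_e$ one pulls back such a decomposition of $-K_{\mathbb{F}_e}$ and keeps track of multiplicities\dots, which only increases the number of pieces.'' That is false as stated: subtracting exceptional classes can destroy effectivity of the individual pieces. Concretely, for $e\ge 1$ the class $\mu^*C_0-e_i$ is effective only when $p_i\in C_0$ (since $h^0(\sshf{\mathbb{F}_e}(C_0))=1$), and $\mu^*f-e_i-e_j$ is effective only when $p_i,p_j$ lie on (or are infinitely near to) a common fibre; so when many of the $n$ blown-up points sit on distinct fibres with $n>e+2$, the naive pulled-back decomposition simply does not consist of effective divisors. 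The hole is repairable --- the required number of pieces $1+\lceil K_S^2/3\rceil$ drops to $2$ once $n\ge 5$, and then $-K_S=\mu^*C_0+\bigl(\mu^*(C_0+(e+2)f)-\textstyle\sum e_i\bigr)$ works because the second class has $\chi=e+6-n\ge 1$ and $h^2=0$; for $n\le 4\le e+2$ one can absorb each point into a distinct fibre class or into a single copy of $C_0$ and still retain $e+4\ge 4$ pieces --- but this bookkeeping (including infinitely near points) must actually be carried out, and as written your proof does not do it. Until that case is closed, the claim for anticanonical surfaces dominating $\mathbb{F}_e$ with $e\ge 2$ and $2\le K_S^2\le 8$ is unproved.
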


\begin{proposition}[{\cite[Prop.~3.8]{RS16}}]\label{base point freeness of K+A}
Let $S$ be an anticanonical rational surface and $A$ be a divisor on $S$ with the property that $A\cdot C\ge 3$ for any curve $C$ on $S$. Then the following are equivalent:
\begin{enumerate}
  \item $K_S+A$ is nef;
  \item $K_S+A$ is base point free;
  \item $A^2\ge 7$.
\end{enumerate}
 Moreover if any of the equivalent conditions holds, then $K_S+A$ is ample unless $(S, A)=(\prj{2}, \sshf{\prj{2}}(3))$.
\end{proposition}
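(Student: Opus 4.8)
The plan is to establish the cycle of implications $(3)\Rightarrow(2)\Rightarrow(1)\Rightarrow(3)$ and then the concluding clause; the two nontrivial inputs are Reider's theorem, used for $(3)\Rightarrow(2)$, and the numerical inequality $(K_S+A)^2\ge 0$ together with Proposition~\ref{inequality1}, used for $(1)\Rightarrow(3)$. The implication $(2)\Rightarrow(1)$ requires no argument, since a base point free divisor is nef.

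For $(3)\Rightarrow(2)$ I would first observe that $A$ is nef, because $A\cdot C\ge 3>0$ for every curve $C$, and that consequently $A\cdot E\ge 3$ for every nonzero effective divisor $E$. Since $A^2\ge 7\ge 5$, Reider's theorem applies to $K_S+A$: if it had a base point there would exist an effective divisor $E$ with $A\cdot E=0,\ E^2=-1$ or $A\cdot E=1,\ E^2=0$, both ruled out by $A\cdot E\ge 3$; hence $K_S+A$ is base point free. For the last assertion, assume $K_S+A$ is nef, not ample, and $K_S+A\not\sim 0$. By the Nakai--Moishezon criterion either $(K_S+A)^2>0$ and $(K_S+A)\cdot C=0$ for some irreducible curve $C$, or $(K_S+A)^2=0$. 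In the first case $A\cdot C=-K_S\cdot C\ge 3$, so $K_S\cdot C\le -3$ and adjunction gives $C^2=2p_a(C)-2-K_S\cdot C\ge 1$; since $C$ is a curve and $(K_S+A)^2>0$, the classes $K_S+A$ and $C$ are linearly independent, hence span a positive-definite rank-two subspace, contradicting the Hodge index theorem. In the second case the morphism attached to $|K_S+A|$ cannot be constant (else $K_S+A\sim 0$, contrary to assumption) and cannot be generically finite (as $(K_S+A)^2=0$), so it is a fibration onto a curve, a general fibre $C$ of which satisfies $C^2=0$ and $K_S\cdot C=-A\cdot C\le -3$; then $2p_a(C)-2=K_S\cdot C\le -3$, which is absurd. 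Therefore $K_S+A$ is ample unless $K_S+A\sim 0$.

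For $(1)\Rightarrow(3)$: if $K_S+A$ is nef then $(K_S+A)^2\ge 0$, which rearranges to $A^2\ge 2\,A\cdot(-K_S)-K_S^2$. When $K_S^2\ge 1$, Proposition~\ref{inequality1} leaves two possibilities: either one of its two exceptional configurations occurs, in which $A^2=9\ge 7$, or $(K_S+A)\cdot(-K_S)\ge 3$, i.e. $A\cdot(-K_S)\ge K_S^2+3$, whence $A^2\ge K_S^2+6\ge 7$. When $K_S^2\le 0$, the class $-K_S$ is nonzero and effective, so $A\cdot(-K_S)\ge 3$ and therefore $A^2\ge 6-K_S^2\ge 6$; moreover equality $A^2=6$ can hold only when $K_S^2=0$ and $A\cdot(-K_S)=3$, but then $A\cdot(A+K_S)=A^2-A\cdot(-K_S)=3$ would be odd, impossible because $D\cdot(D+K_S)$ is even for every divisor $D$ by Riemann--Roch. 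So $A^2\ge 7$ in every case.

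The step I expect to be the main obstacle is precisely the borderline range $K_S^2\le 0$ of $(1)\Rightarrow(3)$: there the crude numerical estimate only gives $A^2\ge 6$, and ruling out $A^2=6$ genuinely seems to require the parity observation rather than any positivity bound. One should also be careful that the ampleness clause does not follow formally from base point freeness, but needs the Hodge-index and adjunction bookkeeping indicated above.
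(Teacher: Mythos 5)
Your proof is correct, and since the paper itself only cites \cite[Prop.~3.8]{Song16} for this statement, your argument in effect supplies the standard proof one expects behind that citation: Reider's theorem (applicable because $A$ is nef with $A^2\ge 7\ge 5$ and $A\cdot E\ge 3$ for every nonzero effective $E$) for $(3)\Rightarrow(2)$, the inequality $(K_S+A)^2\ge 0$ combined with Proposition~\ref{inequality1} --- together with the Riemann--Roch parity of $A\cdot(A+K_S)$ to exclude the borderline $K_S^2=0$, $A^2=6$ --- for $(1)\Rightarrow(3)$, and Nakai--Moishezon plus the Hodge index theorem (and the fibration analysis, where one should pass to the Stein factorization so the general fibre is smooth and connected) for the ampleness clause. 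I see no gaps.
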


\subsection{Fixed part of the anticanonical linear system}

Anticanonical rational surfaces have many common behaviors with K3 surfaces. But for anticanonical rational surfaces, $|-K_S|$ can have a fixed part. Put
\begin{equation*}
    |-K_S|=F+|M|,
\end{equation*}
where $F$ is the fixed part and $M$ the moving part (hence is nef). What is worse, $F$ is neither reduced nor irreducible in general, see \cite[Ex.~3.4]{RS16} for an example due to B. Harbourne.

In case that $\dim|-K_S|\ge 1$ and $F\neq\emptyset$, $F$ satisfies the property that $H^1(\sshf{F})=0$, as the following criterion implies.

\begin{lemma}\label{criterion for rational cycle}
Let $S$ be a surface with $q=p_g=0$. Let $C$ be a curve on $S$. Then $H^1(\sshf{C})=0$ if and only if $H^0(\sshf{S}(K_S+C))=0$.
\end{lemma}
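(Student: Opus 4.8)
The plan is to extract the statement from the long exact cohomology sequence attached to the structure sequence of $C$, together with Serre duality on the surface $S$. First I would write down the short exact sequence
\[
0 \to \sshf{S}(-C) \to \sshf{S} \to \sshf{C} \to 0,
\]
and consider the associated long exact sequence in cohomology. Since $S$ is a (smooth, projective) surface with $q = h^1(\sshf{S}) = 0$, the relevant piece reads
\[
H^1(\sshf{S}) \to H^1(\sshf{C}) \to H^2(\sshf{S}(-C)) \to H^2(\sshf{S}),
\]
where the first term vanishes by the hypothesis $q = 0$. Hence $H^1(\sshf{C}) \hookrightarrow H^2(\sshf{S}(-C))$, and in fact $H^1(\sshf{C})$ maps onto the kernel of $H^2(\sshf{S}(-C)) \to H^2(\sshf{S})$.

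Next I would apply Serre duality on $S$: $H^2(\sshf{S}(-C))^\vee \cong H^0(\sshf{S}(K_S + C))$ and $H^2(\sshf{S})^\vee \cong H^0(\sshf{S}(K_S))$, the latter being zero since $p_g = 0$. Dualizing the exact sequence above, the surjection $H^2(\sshf{S}(-C)) \to H^2(\sshf{S}) = 0$ shows that $H^2(\sshf{S}(-C)) \to H^2(\sshf{S})$ is the zero map, so $H^1(\sshf{C}) \cong H^2(\sshf{S}(-C)) \cong H^0(\sshf{S}(K_S+C))^\vee$. (Alternatively: from $p_g = 0$ the term $H^2(\sshf S)$ drops out, giving directly $H^1(\sshf C)\cong H^2(\sshf S(-C))$.) Therefore $H^1(\sshf{C}) = 0$ if and only if $H^0(\sshf{S}(K_S+C)) = 0$, which is the claim in both directions at once.

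There is essentially no obstacle here: the only points requiring a word of care are that $S$ be smooth and projective so that Serre duality applies in the stated form, and that $C$, being an effective divisor, is a genuine closed subscheme so the structure sequence is valid; both hold under the paper's conventions (curves/cycles are nonzero effective divisors on a smooth projective surface). The hypotheses $q = 0$ and $p_g = 0$ are used exactly once each — the first to kill $H^1(\sshf{S})$, the second to kill $H^2(\sshf{S}) \cong H^0(\sshf{S}(K_S))^\vee$ — and no further positivity or rationality of $S$ is needed, so the lemma applies verbatim to the fixed part $F$ of $|-K_S|$ once one knows $F$ is a nonzero effective divisor and $H^0(\sshf{S}(K_S + F)) = 0$ (the latter because $K_S + F \leq K_S + (-K_S) \sim 0$ forces $H^0 = 0$ unless $K_S + F \sim 0$, which is excluded when $\dim|-K_S| \ge 1$).
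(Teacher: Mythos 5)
Your argument is exactly the paper's: the structure sequence of $C$, the vanishing of $H^1(\sshf{S})$ and $H^2(\sshf{S})$ from $q=p_g=0$, and Serre duality giving $H^1(\sshf{C})\iso H^2(\sshf{S}(-C))\iso H^0(\sshf{S}(K_S+C))^{*}$. The proof is correct and matches the paper's in every essential step.
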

\begin{proof}
The short exact sequence
\begin{equation*}
    \ses{\sshf{S}(-C)}{\sshf{S}}{\sshf{C}},
\end{equation*}
yields the exact sequence
\begin{equation*}
     H^1(\sshf{S})\rightarrow H^1(\sshf{C})\rightarrow  H^2(\sshf{S}(-C))\rightarrow H^2(\sshf{S})\rightarrow 0.
\end{equation*}
Since $H^1(\sshf{S})=H^2(\sshf{S})=0$, one has
\begin{equation*}
    H^1(\sshf{C})\iso H^2(\sshf{S}(-C))\iso H^0(\sshf{S}(K_S+C))^{*},
\end{equation*}
by Serre duality.
\end{proof}

Curves on a surface with the property $H^1(\sshf{C})=0$ was studied by M.~Artin.
\begin{theorem}[{\cite[Thm.~1.7]{Artin62}}]\label{Artin thm}
Given a positive cycle $Z=\sum^s_{i=1} a_i Z_i$ on a surface $S$ which is smooth at $\text{Supp}(Z)$. The following are equivalent:
\begin{enumerate}
  \item $H^1(\sshf{Z})=0$.
  \item The group homomorphism $d: H^1(\sshf{Z}^*)\rightarrow \mathbb{Z}^s$ by
  \begin{equation*}
    \shf{L}\mapsto (\deg{(\shf{L}|_{Z_1})}, \deg{(\shf{L}|_{Z_2})}, \cdots, \deg{(\shf{L}|_{Z_s})})
  \end{equation*}
  is an isomorphism.
  \item For every positive cycle $Y\le Z$, the arithmetic genus $p_a(Y)\le 0$.
\end{enumerate}
\end{theorem}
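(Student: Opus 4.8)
The plan is to establish $(1)\Leftrightarrow(3)$ first and then $(1)\Leftrightarrow(2)$. Since $H^1(\sshf{Z})$, $\operatorname{Pic}(Z)$, and condition $(3)$ are all additive over the connected components of $\operatorname{Supp}(Z)$, I reduce at the outset to the case $\operatorname{Supp}(Z)$ connected. The workhorse is the filtration $0=Z_0<Z_1<\cdots<Z_t=Z$ obtained by stripping off one prime component $C_j$ at a time, together with the short exact sequences
\[
  \ses{\sshf{S}(-Z_{j-1})\otimes\sshf{C_j}}{\sshf{Z_j}}{\sshf{Z_{j-1}}},
\]
in which $\sshf{S}(-Z_{j-1})\otimes\sshf{C_j}$ is a line bundle on the reduced curve $C_j$ of degree $-Z_{j-1}\cdot C_j$; summing Euler characteristics along the filtration recovers $\chi(\sshf{Z})=1-p_a(Z)$, and one always has $h^0(\sshf{Z})\ge 1$.

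For $(1)\Rightarrow(3)$: given $0<Y\le Z$, the ideal sheaf of $Y$ in $Z$ is supported on a curve, so its $H^2$ vanishes and the sequence $\ses{\mathscr{I}_{Y/Z}}{\sshf{Z}}{\sshf{Y}}$ exhibits $H^1(\sshf{Y})$ as a quotient of $H^1(\sshf{Z})=0$; hence $\chi(\sshf{Y})=h^0(\sshf{Y})\ge 1$ and $p_a(Y)\le 0$. The substantive direction is $(3)\Rightarrow(1)$. First, each component $Z_i$ is an irreducible subcycle with $p_a(Z_i)\le 0$, so $p_a(Z_i)=0$; comparing with the normalization ($1-g(\widetilde{Z_i})=\chi(\sshf{\widetilde{Z_i}})\ge\chi(\sshf{Z_i})=1$) forces $Z_i$ to be smooth of genus $0$, i.e.\ $Z_i\iso\prj{1}$. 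Now induct on $\deg Z=\sum_i a_i$, the base case being a single $\prj{1}$. The key point is the adjunction identity
\[
  2\bigl(p_a(Z)-1\bigr)=Z\cdot(Z+K_S)=\sum_i a_i\bigl(Z_i\cdot(Z-Z_i)-2\bigr),
\]
using $K_S\cdot Z_i=-2-Z_i^2$. Since $p_a(Z)\le 0$, the left side is $\le-2<0$, so some summand is negative and there is a component $C=Z_{i_0}$ with $C\cdot(Z-C)\le 1$. Stripping this $C$ off, the line bundle $\sshf{S}(-(Z-C))\otimes\sshf{C}\iso\sshf{\prj{1}}(-(Z-C)\cdot C)$ has degree $\ge-1$, so its $H^1$ vanishes; by induction applied to $Z-C$ (whose subcycles are among those of $Z$, hence still satisfy $(3)$), $H^1(\sshf{Z-C})=0$; the long exact sequence then gives $H^1(\sshf{Z})=0$.

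For $(1)\Leftrightarrow(2)$: I filter $\sshf{Z}^{*}$ by the subsheaves $1+\mathscr{I}$ attached to subcycles; their successive quotients are coherent sheaves supported on the one-dimensional $\operatorname{Supp}(Z)$, so (as $H^2$ of a curve-supported sheaf vanishes) $\operatorname{Pic}(Z)\to\operatorname{Pic}(Z_{\mathrm{red}})$ is surjective with kernel a quotient of the $H^1$'s of those quotients, while $\operatorname{Pic}(Z_{\mathrm{red}})\to\prod_i\operatorname{Pic}(Z_i)$ has kernel governed by the cohomology of sheaves supported on the zero-dimensional pairwise intersections. The degree map $d$ is always surjective (strip off components via Mayer--Vietoris, noting that the scheme-theoretic intersections are artinian and that $\operatorname{Pic}(Z_i)\to\mathbb{Z}$ is onto for every curve). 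If $H^1(\sshf{Z})=0$, then by $(3)$ each $Z_i\iso\prj{1}$, the intersection graph has no loops, and the relevant kernels vanish, so $d$ is an isomorphism. Conversely, since $Z$ is a curve, $\operatorname{Pic}(Z)$ is smooth with tangent space $H^1(\sshf{Z})$ at the origin; hence $H^1(\sshf{Z})\ne 0$ gives $\dim\operatorname{Pic}^0(Z)>0$, so $\ker d\supseteq\operatorname{Pic}^0(Z)\ne 0$ and $d$ is not injective.

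The main obstacle is $(3)\Rightarrow(1)$ --- more precisely, finding a prime component that can be stripped off without producing a nonvanishing intermediate $H^1$. The adjunction identity above resolves exactly this, and once it is in hand the induction is routine, provided one checks that stripping keeps us within the class of cycles satisfying $(3)$ and that a single $\prj{1}$ begins the induction. The equivalence $(1)\Leftrightarrow(2)$ is then mostly bookkeeping: identifying the unipotent kernels of $\operatorname{Pic}(Z)\to\prod_i\operatorname{Pic}(Z_i)$ with cohomology of curve- and point-supported sheaves, and reading off that they vanish precisely when $H^1(\sshf{Z})=0$.
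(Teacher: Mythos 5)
The paper does not prove this statement at all --- it is quoted directly from Artin's 1962 paper as a black box --- so there is no internal proof to compare against; I can only judge your argument on its own. Your treatment of $(1)\Leftrightarrow(3)$ is correct and complete, and is essentially Artin's original argument: $(1)\Rightarrow(3)$ via the surjection $H^1(\sshf{Z})\twoheadrightarrow H^1(\sshf{Y})$ is fine, and in the converse the adjunction identity $2\bigl(p_a(Z)-1\bigr)=\sum_i a_i\bigl(Z_i\cdot(Z-Z_i)-2\bigr)$ correctly produces a component $C$ with $C\cdot(Z-C)\le 1$, so that $\sshf{S}(-(Z-C))|_C$ is $\sshf{\prj{1}}(d)$ with $d\ge -1$, its $H^1$ vanishes, and the induction on $\sum_i a_i$ closes. (This induction does not even need your initial reduction to connected support.)

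The half $(1)\Leftrightarrow(2)$ is only a sketch, and one step needs repair. The direction $(2)\Rightarrow(1)$ via smoothness of the Picard scheme is fine; over $\mathbb{C}$ the exponential sequence does it even more directly, identifying $\ker d$ with $H^1(\sshf{Z})/\mathrm{im}\,H^1(Z,\mathbb{Z})$, which vanishes iff $H^1(\sshf{Z})=0$ since a countable group cannot fill a positive-dimensional vector space. The gap is in $(1)\Rightarrow(2)$: you identify the kernel of $\operatorname{Pic}(Z_j)\to\operatorname{Pic}(Z_{j-1})$ with a quotient of $H^1(\mathscr{I})$ for $\mathscr{I}=\mathscr{I}_{Z_{j-1}/Z_j}$, but the isomorphism of sheaves of abelian groups $1+\mathscr{I}\iso\mathscr{I}$ requires $\mathscr{I}^2=0$, which holds only when the stripped component $C_j$ still occurs in $Z_{j-1}$. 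When you remove the \emph{last} copy of a component the extension is not square-zero, and you must either refine the filtration (interpolating the scheme $Z_{j-1}\cap C_j$, so that the leftover kernel is supported on a zero-dimensional scheme and contributes no $H^1$) or use a truncated exponential/logarithm. You gesture at this with the phrase about ``point-supported sheaves'' but do not carry it out. You should also state explicitly that the successive quotients $\sshf{S}(-Z_{j-1})|_{C_j}$ have vanishing $H^1$ because you may reuse the same good stripping order constructed in $(3)\Rightarrow(1)$; without specifying the order this vanishing is false in general. Both points are repairable, and the overall strategy is sound.
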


Note that the theorem implies that if $H^1(\sshf{Z})=0$, then for every positive cycle $Y\le Z$, $H^1(\sshf{Y})=0$. In particular, each irreducible component of $Z$ is a smooth rational curve. This is also obvious in the special case when $q=p_g=0$ in view of Lemma \ref{criterion for rational cycle}.
Following \cite{Artin62}, we define
\begin{definition}
Let $C$ be a curve on a surface $S$. $C$ is called a $\textit{rational cycle}$ if $H^1(C, \sshf{C})=0$.
\end{definition}

The following simple observation is useful
\begin{lemma}\label{normal crossing}
Let $Z$ be a rational cycle. Then for any two distinct components $C, C'$ of $Z$, the intersection number $C\cdot C'\le 1$.
\end{lemma}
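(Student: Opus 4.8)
The plan is to apply Artin's characterization (Theorem \ref{Artin thm}(3)) to a suitably small subcycle and then compute an arithmetic genus by adjunction. Since $Z$ is a rational cycle, every positive subcycle $Y\le Z$ satisfies $p_a(Y)\le 0$; moreover, as noted right after Theorem \ref{Artin thm}, each irreducible component of $Z$ is a smooth rational curve, so $p_a(C)=p_a(C')=0$ for any two of its components.

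First I would fix two distinct components $C,C'$ of $Z$ and observe that $Y:=C+C'$ is a positive cycle with $Y\le Z$, hence $p_a(Y)\le 0$ by Theorem \ref{Artin thm}(3). Next I would expand $p_a(C+C')$ using the adjunction formula $p_a(D)=1+\tfrac12\big(D^2+D\cdot K_S\big)$ for a divisor $D$ on the smooth surface $S$: a direct computation gives
\begin{equation*}
    p_a(C+C')=p_a(C)+p_a(C')+C\cdot C'-1 = C\cdot C'-1,
\end{equation*}
using $p_a(C)=p_a(C')=0$. Combining this with $p_a(C+C')\le 0$ yields $C\cdot C'\le 1$, which is the claim.

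There is essentially no obstacle here: the only points to check are that $C+C'$ is genuinely a subcycle of $Z$ (immediate, since $C$ and $C'$ appear in $Z$ with coefficient at least $1$) and that the components are smooth rational curves so the genus cross-term collapses to $C\cdot C'-1$. One could equally phrase the argument via the exact sequence $\ses{\sshf{S}(-C')|_C}{\sshf{C+C'}}{\sshf{C'}}$ and the vanishing $H^1(\sshf{C+C'})=0$, but the adjunction computation through Artin's theorem is the cleanest route.
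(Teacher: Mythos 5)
Your proof is correct and follows essentially the same route as the paper: apply Artin's criterion to the subcycle $C+C'$ to get $p_a(C+C')\le 0$, then expand by adjunction using that $C$ and $C'$ are smooth rational curves to conclude $C\cdot C'\le 1$. Your version just makes the genus computation explicit where the paper leaves it as a one-line remark.
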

\begin{proof}
Since $C+C'$ is a sub-cycle of $Z$, by Theorem \ref{Artin thm}, $p_a(C+C')\le 0$, and hence $(C+C'+K_S)\cdot(C+C')\le -2$. The assertion follows using the fact that $C$ and $C'$ are smooth rational curves.
\end{proof}

\begin{lemma}\label{M.F}
Suppose $\dim|-K_S|\ge 1$ and $F\neq \emptyset$. Then
\begin{equation*}
    M\cdot F= 2h^0(\sshf{F}) \quad \text{and }\quad  h^0(\sshf{M})=h^0(\sshf{F}).
\end{equation*}
\end{lemma}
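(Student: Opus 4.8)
The plan is to exploit the linear equivalence $-K_S \sim F + M$ to convert the adjoint classes $K_S + F$ and $K_S + M$ into $-M$ and $-F$ respectively, apply the criterion of Lemma~\ref{criterion for rational cycle} to deduce that both $F$ and a member of $|M|$ are rational cycles, and then read off the two identities from Riemann--Roch together with the resulting $H^1$-vanishing.

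First I would observe that $M \neq 0$: if $M=0$, then $|-K_S|$ would be entirely fixed, contradicting $\dim|-K_S|\ge 1$. Since $F$ is a nonzero effective divisor, $-F$ is not effective (otherwise $F$ together with a member of $|-F|$ would be linearly equivalent to zero, forcing $F=0$); the same argument applied to $M$ shows $-M$ is not effective. Hence $h^0(\sshf{S}(-F))=h^0(\sshf{S}(-M))=0$. Because $S$ has $q=p_g=0$ and $K_S+F\sim -M$, $K_S+M\sim -F$, Lemma~\ref{criterion for rational cycle} then gives $H^1(\sshf{F})=0$ and $H^1(\sshf{M})=0$ for every member $M\in|M|$; in particular each such $M$, like $F$, is a rational cycle, and $h^0(\sshf{M})$ does not depend on the chosen member.

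Next I would compute Euler characteristics. For any curve $C$ on $S$, the sequence $\ses{\sshf{S}(-C)}{\sshf{S}}{\sshf{C}}$ and Riemann--Roch on $S$ give $\chi(\sshf{C})=-\tfrac12 C\cdot(C+K_S)$. Substituting $C=F$ and $F+K_S\sim -M$ yields $\chi(\sshf{F})=\tfrac12 M\cdot F$; substituting $C=M$ and $M+K_S\sim -F$ yields $\chi(\sshf{M})=\tfrac12 M\cdot F$ as well. Combining with the $H^1$-vanishing from the previous step, $h^0(\sshf{F})=\chi(\sshf{F})=\tfrac12 M\cdot F$, which is the first claimed equality, and $h^0(\sshf{M})=\chi(\sshf{M})=\chi(\sshf{F})=h^0(\sshf{F})$, which is the second.

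I do not expect a serious obstacle. The one point worth stating carefully is that Lemma~\ref{criterion for rational cycle} is being applied to the moving part, which amounts to the assertion that $K_S+M\sim -F$ is non-effective; this is immediate precisely because $F$ is a genuine nonzero fixed part. Everything else is formal once $-K_S\sim F+M$ is substituted into the adjunction formula.
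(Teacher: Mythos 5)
Your proof is correct and uses essentially the same ingredients as the paper's: the relations $K_S+F\sim -M$, $K_S+M\sim -F$, the non-effectivity of $-F$ and $-M$, and Riemann--Roch. The only (cosmetic) difference is that you compute $\chi(\sshf{F})$ and $\chi(\sshf{M})$ directly after establishing $H^1(\sshf{F})=H^1(\sshf{M})=0$ via Lemma~\ref{criterion for rational cycle}, whereas the paper computes $\chi(\sshf{S}(-F))$ and then obtains the second identity from the first by Serre duality; the two bookkeepings are equivalent.
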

\begin{proof}
Since $\dim|-K_S|\ge 1$, one has that $M\neq 0$. The exact sequence
\begin{equation*}
    \ses{\sshf{S}(-F)}{\sshf{S}}{\sshf{F}}
\end{equation*}
yields
\begin{eqnarray*}
   h^0(\sshf{F})&=& 1+ h^1(\sshf{S}(-F))\\
   &=& 1-\chi(\sshf{S}(-F))\quad\quad\: \text{note } h^0(\sshf{S}(-F))=h^2(\sshf{S}(-F))=0\\
   &=& -\frac{(-F)\cdot(-F-K_S)}{2}\\
   &=&\frac{M\cdot F}{2}.
\end{eqnarray*}
This gives the first assertion that $M\cdot F=2h^0(\sshf{F})$. For the second, consider the exact sequence
\begin{equation*}
    \ses{\sshf{S}(-M)}{\sshf{S}}{\sshf{M}}.
\end{equation*}
Using the above result, we get that
\begin{equation*}
    h^0(\sshf{M})=1+h^1(\sshf{S}(-M))=1+h^1(\sshf{S}(-F))=h^0(\sshf{F}),
\end{equation*}
where the second identity is by Serre duality. This finishes the proof.
\end{proof}

Since $M\cdot (-K_S)\ge M\cdot F\ge 2$ by Lemma \ref{M.F}, we deduce that $M$ is base point free and all higher cohomology vanishes by \cite[Thm 3.1]{Harbourne97}. We put $s:=h^0(\sshf{F})=h^0(\sshf{M})$. A general member in $|M|$ can be written as $\sum^s_{i=1} M_i$, where $M_i$ are smooth irreducible curve. Since $M^2_i=M_i\cdot M\ge 0$ for all $i$, by Hodge index theorem, there are two possibilities:
\begin{enumerate}
  \item  $s>1$, $M^2_i=0$ for all $i$.
  \item  $s=1$, $M_1^2\ge 0$.
\end{enumerate}
By the same argument as above, we can get that
\begin{equation*}
    \frac{F\cdot M_i}{2}=h^0(\sshf{F+\sum_{j\neq i} M_j})\ge 1.
\end{equation*}
But by Lemma \ref{M.F}, $F\cdot M=2s$, so $F\cdot M_i=2$, for each $i$. In either case, $M_i\iso \mathbb{P}^1$.
\subsection{Multiplication maps of line bundles}

For regular varieties, sometimes one can reduce the surjectivity of considered multiplication maps of line bundles to the surjectivity of multiplication maps on a hypersurface, as the following lemma shows.
\begin{lemma}[{\cite[p.~154]{GalPurna99}}]\label{GP Lemma}
Let $X$ be a smooth variety with $H^1(\sshf{X})=0$. Let $E$ be a vector bundle and $L\iso\sshf{X}(D)$ be a base point free line bundle with the property that $H^1(E\otimes L^{-1})=0$. If the natural map $H^0(E|_D)\otimes H^0(L|_D)\rightarrow H^0(E\otimes L|_D)$ is surjective, then so is the natural map $H^0(E)\otimes H^0(L)\rightarrow H^0(E\otimes L)$.
\end{lemma}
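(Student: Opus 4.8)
The plan is to carry out the standard ``restrict to the divisor $D$'' argument, keeping track of an actual section $s\in H^0(X,L)$ whose zero scheme is $D$. Write $V\subseteq H^0(E\otimes L)$ for the image of the multiplication map $H^0(E)\otimes H^0(L)\to H^0(E\otimes L)$; the goal is to show $V=H^0(E\otimes L)$.

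First I would extract the three cohomological consequences of the hypotheses by tensoring the structure sequence $\ses{\sshf{X}(-D)}{\sshf{X}}{\sshf{D}}$ with, respectively, $E$, $L$, and $E\otimes L$. Tensoring with $E$ gives $\ses{E\otimes L^{-1}}{E}{E|_D}$; since $H^1(E\otimes L^{-1})=0$ by assumption, the restriction $H^0(E)\to H^0(E|_D)$ is surjective. Tensoring with $L$ gives $\ses{\sshf{X}}{L}{L|_D}$; since $H^1(\sshf{X})=0$, the restriction $H^0(L)\to H^0(L|_D)$ is surjective. Tensoring with $E\otimes L$ gives $\ses{E}{E\otimes L}{E\otimes L|_D}$, in which the injection $E\hookrightarrow E\otimes L$ is multiplication by $s$; hence the kernel of the restriction $H^0(E\otimes L)\to H^0(E\otimes L|_D)$ is exactly $s\cdot H^0(E)$, and this is contained in $V$ because $s\in H^0(L)$.

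Now I would take an arbitrary $\xi\in H^0(E\otimes L)$ and let $\bar\xi\in H^0(E\otimes L|_D)$ be its restriction. The surjectivity hypothesis on $D$ lets me write $\bar\xi=\sum_i \bar e_i\otimes\bar\ell_i$ with $\bar e_i\in H^0(E|_D)$ and $\bar\ell_i\in H^0(L|_D)$. Using the two surjectivities established above I lift each $\bar e_i$ to some $e_i\in H^0(E)$ and each $\bar\ell_i$ to some $\ell_i\in H^0(L)$; then $\eta:=\sum_i e_i\ell_i$ lies in $V$ and restricts to $\bar\xi$ on $D$. Therefore $\xi-\eta$ lies in the kernel of $H^0(E\otimes L)\to H^0(E\otimes L|_D)$, which by the previous paragraph equals $s\cdot H^0(E)\subseteq V$; so $\xi\in V$, as wanted.

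There is no serious obstacle here: the argument is a diagram chase through the three restriction sequences. The two points where one must be slightly careful are precisely where the hypotheses are used: the lifting of the boundary data $\bar\ell_i$ from $D$ to $X$ needs $H^1(\sshf{X})=0$, and the identification of the kernel of $H^0(E\otimes L)\to H^0(E\otimes L|_D)$ with $s\cdot H^0(E)$ is why one works with the honest divisor $D$ cut out by a section of $L$ rather than with an abstract member of the linear system $|L|$. One should also note that all three short exact sequences are legitimate because $D$ is an effective Cartier divisor and $E$ is locally free, so no torsion or flatness issue arises.
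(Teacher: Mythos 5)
Your argument is correct and complete: the three restriction sequences, the two lifting surjectivities supplied by $H^1(\sshf{X})=0$ and $H^1(E\otimes L^{-1})=0$, and the identification of the kernel of $H^0(E\otimes L)\to H^0(E\otimes L|_D)$ with $s\cdot H^0(E)$ are exactly the standard proof of this lemma. The paper itself gives no proof, citing Gallego--Purnaprajna instead, and your diagram chase is precisely the argument given there, so there is nothing to compare beyond noting that the base point freeness of $L$ plays no role in this particular verification.
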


We give a criterion for the surjectivity of multiplication maps on anticanonical rational surfaces in the following lemma, which strengthens \cite[Lemma 5.5]{RS16}. Here, we remove the assumption that $B'$ is effective, making the criterion more flexible in applications. For the reader's convenience, we give a sketch of its proof, while details can be found in \cite[pp. 262, 263]{RS16}.
\begin{lemma}\label{surjectiveness of multiplication map2}
Let $S$ be an anticanonical rational surface and $L$ be a divisor on $S$ such that $|K_S+L|$ contains a nonsingular curve. Let $B'$ be a divisor on $S$ with the property that $H^1(\sshf{S}(B'))=0$. Suppose $(K_S+L)\cdot (-2K_S+B')\ge 5$ and $(K_S+L)\cdot (-K_S+B')\ge 2$. Then the natural map
 \begin{equation*}
    H^0(\sshf{S}(K_S+L+B'))\otimes H^0(\sshf{S}(K_S+L))\rightarrow H^0(\sshf{S}(2K_S+2L+B'))
 \end{equation*}
 is surjective.
\end{lemma}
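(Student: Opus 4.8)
The plan is to restrict the whole configuration to a nonsingular curve $C\in|K_S+L|$ and then invoke the standard theorem on multiplication of line bundles on a curve.

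\emph{Step 1 (reduction to $C$).} Let $s_C\in H^0(\sshf{S}(K_S+L))$ cut out $C$. Since $\sshf{S}(-(K_S+L))$ is the ideal sheaf of $C$, twisting the structure sequence of $C$ by $\sshf{S}(K_S+L)$, by $\sshf{S}(K_S+L+B')$ and by $\sshf{S}(2K_S+2L+B')$ yields three exact sequences. As $H^1(\sshf{S})=0$ and $H^1(\sshf{S}(B'))=0$, the first two give that restriction to $C$ is onto on $H^0$ for the line bundles $K_S+L$ and $K_S+L+B'$, while the third gives that a section of $2K_S+2L+B'$ vanishing along $C$ equals $s_C$ times a section of $K_S+L+B'$. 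A routine diagram chase — the mechanism behind Lemma~\ref{GP Lemma} — then reduces the assertion to surjectivity of
\begin{equation*}
H^0\!\big(C,(K_S+L+B')|_C\big)\otimes H^0\!\big(C,(K_S+L)|_C\big)\longrightarrow H^0\!\big(C,(2K_S+2L+B')|_C\big).
\end{equation*}

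\emph{Step 2 (degrees on $C$).} Put $g=p_a(C)$. By adjunction $K_C=(2K_S+L)|_C$, so $2g-2=(2K_S+L)\cdot(K_S+L)$ and hence
\begin{equation*}
\deg\big((K_S+L)|_C\big)=2g-2+(K_S+L)\cdot(-K_S),\qquad \deg\big((K_S+L+B')|_C\big)=2g-2+(K_S+L)\cdot(-K_S+B').
\end{equation*}
If $(K_S+L)\cdot(-K_S)\ge 3$, the first degree is $\ge 2g+1$, and hypothesis~(2) makes the second $\ge 2g$. If $(K_S+L)\cdot(-K_S)=2$, then
\begin{equation*}
(K_S+L)\cdot(-K_S+B')=(K_S+L)\cdot(-2K_S+B')-(K_S+L)\cdot(-K_S)\ge 5-2=3,
\end{equation*}
so the second degree is $\ge 2g+1$ and the first equals $2g$. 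Thus (barring the degenerate range $(K_S+L)\cdot(-K_S)\le 1$, which does not occur in the applications) one of $(K_S+L)|_C$, $(K_S+L+B')|_C$ has degree $\ge 2g+1$ — hence is base point free, non-special, with $\ge g+2$ sections — and the other has degree $\ge 2g$ — hence is base point free and non-special.

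\emph{Step 3 (multiplication on $C$).} Finish with the classical fact that on a smooth curve of genus $g$, if $\deg L_1\ge 2g+1$ and $\deg L_2\ge 2g$ then $H^0(L_1)\otimes H^0(L_2)\to H^0(L_1\otimes L_2)$ is onto. (One proof: twisting the sequence $0\to M_{L_1}\to H^0(L_1)\otimes\sshf{C}\to L_1\to 0$ by $L_2$ reduces surjectivity to $H^1(C,M_{L_1}\otimes L_2)=0$; and the syzygy bundle $M_{L_1}$ is semistable of slope $-\deg L_1/(\deg L_1-g)>-2$, so $M_{L_1}\otimes L_2$ is semistable of slope $>2g-2$, which gives the vanishing.) Feeding in the two restricted bundles produced in Step~2 completes the proof.

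The part I expect to be the real content is the degree bookkeeping of Step~2: the bounds are sharp, so it is precisely the constants $5$ and $2$ in the hypotheses that make both inequalities ($\ge 2g+1$ and $\ge 2g$) simultaneously available, and the case division according to the size of $(K_S+L)\cdot(-K_S)$ has to be carried out with care.
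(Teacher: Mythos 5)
Your Step 1 is exactly the paper's first move (Lemma \ref{GP Lemma} applied with $E=\sshf{S}(K_S+L+B')$ and the base point free bundle $\sshf{S}(K_S+L)$), but from there the two arguments genuinely diverge. The paper stays on the curve $\Delta\in|K_S+L|$ and applies Green's $H^0$-lemma, verifying $h^1(B'|_{\Delta})\le h^0((K_S+L)|_{\Delta})-2$ by bounding $h^0((\Delta+K_S-B')|_{\Delta})$ with Clifford's theorem when that bundle is special and with Riemann--Roch when it is not; you instead do the degree bookkeeping $\deg((K_S+L)|_C)=2g-2+(K_S+L)\cdot(-K_S)$ and $\deg((K_S+L+B')|_C)=2g-2+(K_S+L)\cdot(-K_S+B')$ and invoke the Castelnuovo--Mumford criterion ($\deg L_1\ge 2g+1$, $\deg L_2\ge 2g$). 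Your case split on $(K_S+L)\cdot(-K_S)$ is correct and arguably more transparent than the paper's estimate, and it isolates exactly why the constants $5$ and $2$ appear.

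The real issue is the caveat you bury in parentheses. The lemma as stated does \emph{not} assume $(K_S+L)\cdot(-K_S)\ge 2$, and that range is not vacuous under the written hypotheses; in fact the statement fails there. For example, on a rational elliptic surface let $K_S+L=e$ be a section of the elliptic fibration (a rigid $(-1)$-curve, so $|K_S+L|=\{e\}$ contains a nonsingular curve and $e\cdot(-K_S)=1$) and let $B'$ be very ample with $e\cdot B'\ge 3$ and $H^1(B')=0$: both intersection hypotheses hold, yet $h^0(\sshf{S}(e))=1$ while $h^0(\sshf{S}(2e+B'))=h^0(\sshf{S}(e+B'))+e\cdot B'-1>h^0(\sshf{S}(e+B'))$, so the map cannot surject. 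So what you call the degenerate range is a genuine gap relative to the statement --- one the paper's own sketch shares, since its case (i) silently needs $h^0((K_S+L)|_{\Delta})\ge 2$ when $(\Delta+K_S-B')|_{\Delta}$ is not effective. You are right that in every application one has $K_S+L=K_S+kB+A$ ample with $(K_S+L)\cdot(-K_S)\ge 2$ (Propositions \ref{inequality1} and \ref{normal generation of K+kB+A}), so nothing downstream is affected; but the correct conclusion is that $(K_S+L)\cdot(-K_S)\ge 2$ must be added as a hypothesis, not that the excluded case can be waved away.
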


\begin{proof}[Sketch of the Proof]
Since $H^1(B')=0$, by Lemma \ref{GP Lemma}, the problem can be reduced to the surjectivity of
\begin{equation*}
    H^0((K_S+L+B')|_{\Delta})\otimes  H^0((K_S+L)|_{\Delta})\rightarrow  H^0((2K_S+2L+B')|_{\Delta}),
\end{equation*}
where $\Delta\in|K_S+L|$ is a nonsingular curve.

To apply Green's $H^0$-lemma \cite{Green84}, we shall establish the inequality
\begin{equation}\label{inequality to apply H^0}
    h^1(B'|_{\Delta})\le h^0((K_S+L)|_{\Delta})-2.
\end{equation}
By Riemann-Roch, $h^0((K_S+L)|_{\Delta})\ge (K_S+L)^2+1-g(\Delta)$, where $g(\Delta)$ is the genus of $\Delta$. On the other hand, $h^1(B'|_{\Delta})=h^0((\Delta+K_S-B')|_{\Delta})$ by duality. For an upper bound of $h^0((\Delta+K_S-B')|_{\Delta})$, we discuss by cases.

(i) If $\deg (\Delta+K_S-B')|_{\Delta}\le \deg K_{\Delta}$, this is treated in \cite{RS16} and the assumption that $(K_S+L)\cdot (-2K_S+B')\ge 5$ is utilized.

(ii) If $\deg (\Delta+K_S-B')|_{\Delta}>\deg K_{\Delta}$, then
  \begin{equation*}
    h^0((\Delta+K_S-B')|_{\Delta})=\chi((\Delta+K_S-B')|_{\Delta})=(2K_S+L-B')\cdot (K_S+L)+1-g(\Delta).
  \end{equation*}
Therefore
\begin{equation*}
    (h^0((K_S+L)|_{\Delta})-2)-h^1(B'|_{\Delta})\ge (K_S+L)\cdot (-K_S+B')\ge 0
\end{equation*}
by the assumption. So (\ref{inequality to apply H^0}) holds, and this completes the proof.
\end{proof}

For any rational cycle $C$ on a surface, we have the following criterion for surjectivity of multiplication maps over $C$.
\begin{lemma}\label{surjectiveness of multiplication map}
Let $S$ be a surface and $C$ a curve with $h^1(\sshf{C})=0$. Put $C=\sum a_i\Gamma_i$, where $\Gamma_i$ are irreducible components of $C$. Let $L_1, L_2$ be two line bundles on $S$ such that $L_1\cdot\Gamma_i>0, L_2\cdot \Gamma_i\ge 0$ for every $i$. Suppose $H^1(L_2(-\Gamma_i))=0$ for every $i$. Then the natural map
\begin{equation*}
    H^0({L_1}|_C)\otimes H^0(L_2)\rightarrow H^0((L_1+L_2)|_C)
\end{equation*}
is surjective.
\end{lemma}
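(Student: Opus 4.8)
The plan is to prove this by induction on the cycle $C = \sum a_i \Gamma_i$, peeling off one component at a time, so that the reduced-cycle intersection structure guaranteed by Lemma \ref{normal crossing} (namely $\Gamma_i \cdot \Gamma_j \le 1$ for distinct components) can be exploited. First I would handle the base case where $C = \Gamma$ is irreducible. In that case $\Gamma \cong \mathbb{P}^1$ by the remark after Theorem \ref{Artin thm}, and $L_1|_\Gamma$ has positive degree while $L_2|_\Gamma$ has nonnegative degree. On $\mathbb{P}^1$ any two line bundles of nonnegative degree multiply surjectively on global sections, so the map $H^0(L_1|_\Gamma)\otimes H^0(L_2|_\Gamma) \to H^0((L_1+L_2)|_\Gamma)$ is onto; and the hypothesis $H^1(L_2(-\Gamma)) = 0$ together with the restriction sequence $0 \to L_2(-\Gamma) \to L_2 \to L_2|_\Gamma \to 0$ shows $H^0(L_2) \to H^0(L_2|_\Gamma)$ is surjective, so the desired map factors through a surjection.

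For the inductive step, write $C = C' + \Gamma_j$ where $C' \le C$ is a smaller positive cycle and $\Gamma_j$ is a component with $a_j \ge 1$ (choose $\Gamma_j$ so that $C'$ is still effective, e.g. any component appearing in $C$). By Theorem \ref{Artin thm}, $H^1(\sshf{C'}) = 0$ and $H^1(\sshf{\Gamma_j}) = 0$, so the inductive hypothesis applies to both $C'$ and $\Gamma_j$ (the numerical hypotheses on $L_1, L_2$ restrict to subcycles). The Mayer--Vietoris-type sequence coming from $0 \to \sshf{C}(-\Gamma_j) \to \sshf{C} \to \sshf{\Gamma_j} \to 0$ — equivalently, tensoring $\sshf{C'} = \sshf{C}(-\Gamma_j)|_C$ appropriately — gives for any line bundle $N$ on $S$ an exact sequence
\begin{equation*}
0 \to N|_{C'} \otimes \sshf{S}(-\Gamma_j) \to N|_C \to N|_{\Gamma_j} \to 0,
\end{equation*}
up to the usual identification; more precisely one uses $0 \to N(-C')|_{\Gamma_j} \to N|_C \to N|_{C'} \to 0$. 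Applying this with $N = L_1$ and with $N = L_1 + L_2$, and comparing the multiplication maps over $C$, $C'$ and $\Gamma_j$ in a commutative diagram, the surjectivity over $C$ follows from surjectivity over $C'$ and over $\Gamma_j$ (the latter is the base case, applied with $\Gamma_j$ in place of $\Gamma$, noting $L_1\cdot\Gamma_j > 0$, $L_2 \cdot \Gamma_j \ge 0$, $H^1(L_2(-\Gamma_j))=0$) together with a vanishing ensuring the connecting maps do not obstruct — specifically one needs $H^1(L_1(-C')|_{\Gamma_j}) = 0$, which holds because $L_1 \cdot \Gamma_j > 0$ and, by Lemma \ref{normal crossing}, $C' \cdot \Gamma_j$ is small, so $L_1(-C')|_{\Gamma_j}$ still has degree $> \deg K_{\Gamma_j} = -2$ provided $L_1 \cdot \Gamma_j$ dominates; if not, a more careful choice of the order in which components are peeled (always removing a component $\Gamma_j$ with $\Gamma_j \cdot (C - \Gamma_j)$ minimal, or removing from an "extremal" vertex of the dual graph, which is a tree-like configuration by $p_a \le 0$) secures it.

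The main obstacle I expect is exactly this last point: controlling the degree of $L_1(-C')|_{\Gamma_j}$ and $L_2(-C')|_{\Gamma_j}$ on the component being split off, so that the relevant $H^1$'s on $\Gamma_j \cong \mathbb{P}^1$ vanish and the diagram chase closes. The positivity hypotheses are only stated for $L_1, L_2$ themselves, not for their twists by subcycles, so one must use the combinatorial structure of a rational cycle — that its dual graph has no cycles and each component meets the rest in few points (Lemma \ref{normal crossing}) — to arrange an ordering of the components $\Gamma_{i_1}, \dots, \Gamma_{i_N}$ (with multiplicity) such that each partial sum is a positive subcycle and each newly removed component meets the remaining cycle in at most one point. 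With such an ordering, $(C - C_t)\cdot \Gamma_{i_t} \le 1$ at each stage, hence $\deg L_1(-(C-C_t))|_{\Gamma_{i_t}} \ge L_1\cdot\Gamma_{i_t} - 1 \ge 0 > -2$ and likewise $L_2$ restricts with degree $\ge -1$, giving the needed $H^1$ vanishings on $\mathbb{P}^1$; the induction then goes through cleanly. This is the same strategy used in \cite[Lemma 5.6]{Song16}, and the argument there adapts with only notational changes.
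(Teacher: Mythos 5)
Your proposal is essentially the paper's own argument: the paper gives no independent proof here, simply stating that the proof is identical to that of \cite[Lemma 5.2]{Song16}, which is exactly the component-peeling induction you describe (restriction sequences $0\to L_1(-C')|_{\Gamma_j}\to L_1|_C\to L_1|_{C'}\to 0$, surjectivity of $H^0(L_2)\to H^0(L_2|_{\Gamma_j})$ from the hypothesis $H^1(L_2(-\Gamma_j))=0$, and multiplication on $\mathbb{P}^1$). The one ingredient you flag as the ``main obstacle'' --- choosing at each stage a component $\Gamma$ of the current subcycle $Y$ with $\Gamma\cdot(Y-\Gamma)\le 1$ --- is correctly identified and does hold: if every component satisfied $\Gamma_i\cdot(Y-\Gamma_i)\ge 2$, then $Y\cdot(Y+K_S)=\sum b_i(-2+\Gamma_i\cdot(Y-\Gamma_i))\ge 0$ would give $p_a(Y)\ge 1$, contradicting Theorem \ref{Artin thm}(3).
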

\begin{proof}
The assumption on $L_2$ is slightly different from the one in {\cite[Lemma 5.2]{RS16}}, but the proof is exactly the same. Note that if $L_2$ is ample and base point free and if $-K_S-\Gamma_i$ is effective, then $H^1(L_2(-\Gamma_i))=0$ by \cite[Lemma 5.1]{RS16}.
\end{proof}

\section{Construction of rational cycles, properties of adjoint divisors}
We start by fixing notations for the remaining sections. Let $S$ be an anticanonical rational surface with $\dim|-K_S|\ge 1$. Let $r\ge 2$ be a fixed integer and $\pi: X\rightarrow S$ be a ramified $r$-cyclic covering (possibly singular) over $S$ with the branch locus $\Gamma\in|\sshf{S}(rB)|$ for some integral divisor $B$. Then $K_X\sim\pi^*(K_S+(r-1)B)$ and $\pi_*\sshf{X}\iso \oplus^{r-1}_{i=0}\sshf{S}(-iB)$ as $\sshf{S}$-modules. Moreover let $A$ be an ample divisor on $S$ satisfying that $A^2\ge 7$ and $A\cdot C\ge 3$ for any curve $C$ on $S$.

Since $B$ is a pseudo-effective integral divisor, we have the Zariski decomposition of $B$ as a summation of $\mathbb{Q}$-divisors
\begin{equation*}
    B=P+N,
\end{equation*}
with the property that
\begin{itemize}
  \item[(i)] $P$ is nef;
  \item[(ii)] $N=\sum^{s}_{i=1}a_iE_i$ is effective. If $N\neq 0$, then the intersection matrix $\|(E_i\cdot E_j)\|$ is negative definite;
  \item[(iii)] For each $1\le i\le s$, the intersection number $P\cdot E_i=0$.
\end{itemize}

\begin{lemma}\label{nontriviality of P}
Assume $\dim |-K_S|\ge 1$. Suppose $H^0(\sshf{S}(mK_S+lB))\neq 0$ for some integers $l, m\ge 1$. Then $P\neq 0$.
\end{lemma}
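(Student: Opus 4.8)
The plan is to argue by contradiction: suppose $P=0$, so that the Zariski decomposition collapses to $B=N$, an effective \emph{integral} divisor whose support is contained in the negative definite configuration $\{E_1,\dots,E_s\}$ (possibly $B=0$). The crux will be to show that such a divisor is rigid, i.e.\ $h^0(\sshf{S}(lB))=1$ for every $l\ge 1$, and then to contradict this using the effectivity of $-K_S$ together with the hypothesis.

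For the rigidity, note $lB=lN$ is effective and integral. Let $D'$ be any effective divisor with $D'\sim lN$ (numerical and linear equivalence agree on the rational surface $S$), and write $D'-lN=Z_{+}-Z_{-}$ with $Z_{\pm}\ge 0$ sharing no component. Since $D'\ge 0$ and $Z_{+},Z_{-}$ have no common component, each component of $Z_{-}$ appears in $lN$ with at least the same multiplicity, so $\operatorname{Supp}(Z_{-})\subseteq\{E_i\}$. If $Z_{-}\ne 0$ then $Z_{-}^2<0$ by negative definiteness, while $Z_{+}\equiv Z_{-}$ and $Z_{+}\cdot Z_{-}\ge 0$, so expanding $0=(Z_{+}-Z_{-})^2$ gives $Z_{+}^2=Z_{-}^2<0$ together with $Z_{+}^2=2Z_{+}\cdot Z_{-}-Z_{-}^2\ge -Z_{-}^2>0$, a contradiction; hence $Z_{-}=0$, and then $Z_{+}\equiv 0$ effective forces $Z_{+}=0$ (intersect with an ample divisor). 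Thus $D'=lB$, so $h^0(\sshf{S}(lB))=1$. On the other hand, since $\dim|-K_S|\ge 1$ we have $h^0(\sshf{S}(-K_S))\ge 2$; fix $0\ne\rho_0\in H^0(\sshf{S}(-K_S))$ and, by hypothesis, $0\ne\sigma\in H^0(\sshf{S}(mK_S+lB))$. Multiplication by $\rho_0^{\,m-1}$ gives an injection $H^0(\sshf{S}(-K_S))\hookrightarrow H^0(\sshf{S}(-mK_S))$, and multiplication by $\sigma$ gives an injection $H^0(\sshf{S}(-mK_S))\hookrightarrow H^0(\sshf{S}(lB))$ (a product of nonzero sections of line bundles on the integral surface $S$ is nonzero). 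Composing them yields $h^0(\sshf{S}(lB))\ge h^0(\sshf{S}(-K_S))\ge 2$, contradicting the previous sentence. Therefore $P\ne 0$.

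The only step requiring care is the rigidity $h^0(\sshf{S}(lB))=1$, and that is precisely where the assumption $P=0$ (equivalently, negative definiteness of the support of $B$) enters; the rest is formal, using only that $-K_S$ is effective with $h^0(\sshf{S}(-K_S))\ge 2$. One could alternatively package the rigidity as the standard statement that an effective divisor supported on a negative definite curve configuration is the unique effective member of its class, or derive it from the behaviour of the Zariski decomposition under taking sections.
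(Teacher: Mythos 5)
Your proof is correct and follows essentially the same route as the paper: assume $P=0$, show $h^0(\sshf{S}(lB))=1$ by rigidity of an effective divisor supported on a negative definite configuration, and contradict this via $h^0(\sshf{S}(lB))\ge h^0(\sshf{S}(-K_S))\ge 2$ using the effectivity of $mK_S+lB$. The only difference is that you prove the rigidity step by hand, whereas the paper simply cites \cite[Prop.~2.3.21]{Lazarsfeld04} for the bijectivity of $H^0(\sshf{S})\to H^0(\sshf{S}(mB))$.
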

\begin{proof}
Suppose to the contrary that $P=0$, then $N=B$ is integral. By \cite[Prop. 2.3.21]{Lazarsfeld04}, the natural map
\begin{equation*}
   H^0(S, \sshf{S})\iso H^0(S, \sshf{S}(pB-\left\lceil pN\right\rceil))\rightarrow H^0(S, \sshf{S}(pB))
\end{equation*}
is bijective for any $p\ge 1$. Since $mK_S+lB$ is effective, we have
\begin{equation*}
   h^0(\sshf{S}(lB))=h^0(\sshf{S}((mK_S+lB)+m(-K_S)))\ge h^0(\sshf{S}(-K_S))\ge 2.
\end{equation*}
This gives a contradiction. Thus $P\neq 0$.
\end{proof}

We define a cycle depending on both $|-K_S|$ and $B$
\begin{equation}\label{auxiliary cycle}
 Z=F+\left\lfloor N\right\rfloor.
\end{equation}
Here, we allow $F=\emptyset$, that is, when $|-K_S|$ has no fixed part.

\begin{proposition}\label{vanishing of H^1(B')}
Suppose that $|-K_S|$ has the fixed part $F$ and $\dim |-K_S|\ge 1$. Assume that $B$ satisfies $H^0(mK_S+lB)\neq 0$ for some integers $l, m\ge 1$. Then
\begin{equation*}
    H^1(\sshf{S}(B-Z))=0.
\end{equation*}
\end{proposition}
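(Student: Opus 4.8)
The plan is to realise $B-Z$ as an adjoint divisor $K_S+(\text{nef and big }\mathbb{Q}\text{-divisor})+(\text{klt boundary})$ and then invoke Kawamata--Viehweg vanishing. Write $-K_S\sim F+M$ with $F$ the fixed part of $|-K_S|$ and $M$ the (nef) moving part; as $\dim|-K_S|\ge 1$ and $F\neq\emptyset$, $M$ is a nonzero nef class and $F\cdot M=2s$ with $s=h^0(\sshf{F})\ge 1$ by Lemma \ref{M.F}. Since $Z=F+\left\lfloor N\right\rfloor$, $B=P+N$, and $P+\{N\}=B-\left\lfloor N\right\rfloor$ as $\mathbb{Q}$-divisors, we obtain
\[
    B-Z\ \sim\ K_S+(P+M)+\{N\},
\]
where the boundary $\{N\}=N-\left\lfloor N\right\rfloor$ is effective with all coefficients in $[0,1)$ and $P+M$ is nef. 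Thus it suffices to prove: (a) $P+M$ is big; and (b) $(S,\{N\})$ is klt. Granting these, Kawamata--Viehweg vanishing for klt pairs, applied to the integral divisor $L=B-Z$ with $L-(K_S+\{N\})\sim_{\mathbb{Q}}P+M$ nef and big, gives $H^1(\sshf{S}(B-Z))=0$.

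For (a): a nef class is big iff its self-intersection is positive, so assume for contradiction $(P+M)^2=0$; then $P^2=P\cdot M=M^2=0$. Since $M$ is a nonzero nef class with $M^2=0$, the Hodge index theorem forces $P\equiv\lambda M$ for some $\lambda\in\mathbb{Q}$, and $\lambda>0$ because $P\not\equiv 0$ by Lemma \ref{nontriviality of P} (intersect with an ample class). Then $M\cdot E_i=\lambda^{-1}P\cdot E_i=0$ for each $i$, so $N\cdot M=0$; intersecting $mK_S+lB\equiv -m(F+M)+l(\lambda M+N)$ with the nef class $M$ and using $M^2=N\cdot M=0$ and $F\cdot M=2s$ yields $(mK_S+lB)\cdot M=-2ms<0$, contradicting that $mK_S+lB$ is effective (an effective class meets a nef class nonnegatively). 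Hence $(P+M)^2>0$. This is precisely the step where the hypothesis $F\neq\emptyset$ enters: when $F=\emptyset$ one would intersect instead with $-K_S$, the term $-2ms$ disappears, and for $(-K_S)^2=0$ no contradiction results --- that is the elliptic-fibration case treated separately in Section 4.

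For (b): I would first note that each component $E_i$ of $N$ is a smooth rational curve. Indeed $E_i^2<0$ (the intersection matrix of $N$ is negative definite), so if $(-K_S)\cdot E_i\ge 0$ then $p_a(E_i)\le 0$ by adjunction and $E_i\iso\prj{1}$; and if $(-K_S)\cdot E_i<0$ then $E_i$ is a component of every member of $|-K_S|$, hence a component of the rational cycle $F$, hence again $\iso\prj{1}$ by Theorem \ref{Artin thm}. So $\{N\}$ is supported on a negative-definite configuration of smooth rational curves with all coefficients $<1$, and the remaining task is to verify that on a log resolution of $(S,\{N\})$ every discrepancy exceeds $-1$. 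I expect this to be the main obstacle: in the double-cover situation of \cite{Song16} no fractional divisor appears and Harbourne's vanishing applies to $P$ directly, whereas here $\{N\}$ is genuinely present --- this is why Kawamata--Viehweg replaces Harbourne's vanishing, and why $Z$ is defined by rounding $N$ \emph{down}, so that $\{N\}$ enters as a boundary rather than being subtracted. Once (b) is settled the vanishing is immediate from (a).
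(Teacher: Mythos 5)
Your decomposition $B-Z\sim K_S+(P+M)+\{N\}$ and your part (a) are exactly the paper's argument: the paper also rules out $M\cdot P=0$ by Hodge index (getting $M\equiv cP$ with $M^2=0$, hence $N\cdot M=0$) and then pairs $mK_S+lB$ with $M$ to obtain $-mF\cdot M=-2ms<0$ via Lemma \ref{M.F}, contradicting effectivity; your remark that this is precisely where $F\neq\emptyset$ enters is also on target. So the bigness of $P+M$ is done correctly and in the same way.

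The problem is your part (b), which you yourself flag as ``the main obstacle'' and then do not carry out, so as written the proof is incomplete at its last step. Moreover, the klt condition you propose to verify is not forced by the hypotheses: the components of $N$ are smooth rational curves meeting pairwise in at most one point (Remark \ref{rmk on rational cycle} together with Lemma \ref{normal crossing}), but nothing prevents three or more of them from passing through a common point with coefficients of $\{N\}$ summing to at least $2$, in which case $(S,\{N\})$ is \emph{not} klt and a discrepancy computation would fail. The way the paper closes the argument is different: on a \emph{surface}, Kawamata--Viehweg vanishing in the round-up form $H^1(S,\sshf{S}(K_S+\left\lceil D\right\rceil))=0$ for $D$ nef and big requires no hypothesis whatsoever on the fractional part (this is the content of the paper's footnote to \cite[9.1.18]{Lazarsfeld04II}, going back to Sakai), and one checks that $K_S+\left\lceil P+M\right\rceil=K_S+B+M-\left\lfloor N\right\rfloor\sim B-Z$, so the vanishing follows at once from (a). If you want to see why no klt hypothesis is needed in dimension two: since every coefficient of $\{N\}$ lies in $[0,1)$, the multiplier ideal $\mathcal{J}(\{N\})$ has zero-dimensional cosupport; Nadel vanishing gives $H^1(\sshf{S}(B-Z)\otimes\mathcal{J}(\{N\}))=0$, and the quotient $\sshf{S}(B-Z)/\sshf{S}(B-Z)\otimes\mathcal{J}(\{N\})$ is a skyscraper sheaf with no $H^1$, whence $H^1(\sshf{S}(B-Z))=0$ unconditionally. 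So replace your step (b) by this surface-specific form of the vanishing theorem rather than attempting to prove a klt statement that may be false.
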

\begin{proof}
We have the identity for $\mathbb{Q}$-divisors
\begin{equation*}
  B-(F+\left\lfloor N\right\rfloor)= K_S+(P+M)+\{N\}.
\end{equation*}
We claim that $M\cdot P>0$. Suppose to the contrary that $M\cdot P=0$, then since $P$ and $M$ are nef and thanks to Lemma \ref{nontriviality of P}, they are not numerically trivial, and hence the Hodge index theorem implies that $M=cP$ for some $c\in\mathbb{Q}_{+}$ and $M^2=0$, from which it follows that
\begin{equation*}
    (mK_S+lB)\cdot M=mK_S\cdot M=-mF\cdot M<0,
\end{equation*}
where the inequality is by Lemma \ref{M.F}. This contradicts the assumption. Therefore $(P+M)^2>0$, and hence $P+M$ is a nef and big $\mathbb{Q}$-divisor. Then Kawamata--Viehweg vanishing\footnote{On surfaces, simple normal crossing condition for the fractional part $\{N\}$ is not needed.} (cf. for instance \cite[9.1.18]{Lazarsfeld04II}) applies.
%\textcolor{red}{
%If $\dim|-K_S|=0$, then $M=0$.
%\begin{equation*}
%    B-(F+\left\lfloor N\right\rfloor)= K_S+P+\{N\}.
%\end{equation*}
%We claim $P^2>0$ for otherwise
%\begin{equation*}
%    (mK_S+lB)\cdot P=mK_S\cdot M=-mF\cdot M<0,
%\end{equation*}
%}
\end{proof}

\begin{proposition}\label{rational cycle}
Let $Z$ be a cycle as defined in $(\ref{auxiliary cycle})$. Then
\begin{equation*}
    H^1(Z, \sshf{Z})=0.
\end{equation*}
\end{proposition}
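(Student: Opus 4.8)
The plan is to convert the vanishing $H^1(Z,\sshf{Z})=0$ into a statement about global sections on $S$, and then to exploit two things: the rigidity of the round-down $\left\lfloor N\right\rfloor$ of the negative part of the Zariski decomposition, and the hypothesis $\dim|-K_S|\ge 1$. Since $S$ is rational, $q=p_g=0$, so by Lemma \ref{criterion for rational cycle} it is enough to show $H^0(\sshf{S}(K_S+Z))=0$. Using the decomposition $|-K_S|=F+|M|$ into fixed and moving parts, we have $-K_S\sim F+M$, hence $K_S+Z=K_S+F+\left\lfloor N\right\rfloor\sim\left\lfloor N\right\rfloor-M$, and the goal becomes $H^0(\sshf{S}(\left\lfloor N\right\rfloor-M))=0$. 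If $Z=0$ (that is, $F=\emptyset$ and $\left\lfloor N\right\rfloor=0$) there is nothing to prove, so assume $Z\neq 0$.

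The key step is to show that $\left\lfloor N\right\rfloor$ is the unique effective divisor in its linear equivalence class, i.e.\ $h^0(\sshf{S}(\left\lfloor N\right\rfloor))=1$. The case $N=0$ is trivial, so assume $N\neq 0$, so that the intersection matrix of its components $E_1,\dots,E_s$ is negative definite. Suppose $D'\ge 0$ with $D'\sim\left\lfloor N\right\rfloor$, and split $D'=D'_1+D'_2$ with $D'_1$ supported on $\{E_1,\dots,E_s\}$ and $D'_2$ sharing no component with the $E_i$. Then $\left\lfloor N\right\rfloor-D'_1\sim D'_2$, so intersecting with each $E_j$ gives $(\left\lfloor N\right\rfloor-D'_1)\cdot E_j=D'_2\cdot E_j\ge 0$. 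As $\left\lfloor N\right\rfloor-D'_1$ is an integral combination of the $E_i$, the negativity lemma (group the coefficients into positive and non-positive parts and use negative definiteness) yields $\left\lfloor N\right\rfloor-D'_1\le 0$, i.e.\ $D'_1\ge\left\lfloor N\right\rfloor$. Then $D'_1-\left\lfloor N\right\rfloor$ and $D'_2$ are effective divisors with numerically, hence linearly, trivial sum, so both vanish; thus $D'=\left\lfloor N\right\rfloor$.

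To finish, assume for contradiction that $H^0(\sshf{S}(\left\lfloor N\right\rfloor-M))\neq 0$ and fix an effective $D\sim\left\lfloor N\right\rfloor-M$. For every $M'\in|M|$, the divisor $D+M'$ is effective and linearly equivalent to $\left\lfloor N\right\rfloor$, hence equals $\left\lfloor N\right\rfloor$ by the rigidity just established; therefore every member of $|M|$ is the fixed divisor $\left\lfloor N\right\rfloor-D$, so $\dim|M|=0$. This contradicts $\dim|M|=\dim|-K_S|\ge 1$. Hence $H^0(\sshf{S}(K_S+Z))=0$, and $H^1(Z,\sshf{Z})=0$ follows from Lemma \ref{criterion for rational cycle}.

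The only real obstacle is the rigidity claim in the middle paragraph: one must realize that passing from the negative part $N$ of the Zariski decomposition to its round-down $\left\lfloor N\right\rfloor$ keeps it rigid, which comes down to the negativity of a negative definite configuration of curves. The rest is routine bookkeeping with linear systems, and the degenerate cases ($F=\emptyset$, $N=0$, $\left\lfloor N\right\rfloor=0$) take only a line.
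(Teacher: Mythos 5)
Your proof is correct, but it takes a genuinely different route from the paper's. The paper reduces to $H^0(\sshf{S}(K_S+Z))=0$ exactly as you do, but then splits into three cases ($F\neq\emptyset$; $F=\emptyset$ with $K_S^2>0$; $F=\emptyset$ with $K_S^2=0$) and argues via intersection with the nef part $P$ of the Zariski decomposition: in the first two cases one shows $(K_S+Z)\cdot P<0$, which requires $P\not\equiv 0$ and hence invokes Lemma \ref{nontriviality of P} (and so the hypothesis $H^0(mK_S+lB)\neq 0$), while the borderline case $P\equiv c(-K_S)$ is handled by a separate common-component argument. You instead give a uniform argument: rewrite $K_S+Z\sim\left\lfloor N\right\rfloor-M$, prove that $\left\lfloor N\right\rfloor$ is rigid via the negativity lemma applied to the negative definite configuration $\{E_i\}$, and derive a contradiction from $\dim|M|\ge\dim|-K_S|\ge 1$. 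Each step checks out: the splitting $D'=D'_1+D'_2$, the sign computation $(\left\lfloor N\right\rfloor-D'_1)\cdot E_j\ge 0$, the standard positive/negative part trick for negative definiteness, and the conclusion that an effective divisor linearly equivalent to $0$ vanishes are all sound, and the final step correctly turns the rigidity of $\left\lfloor N\right\rfloor$ against the mobility of $|M|$. What your approach buys is that it avoids the case analysis and, notably, does not use $P\neq 0$ at all, so it proves the vanishing under the standing assumption $\dim|-K_S|\ge 1$ alone; it also transparently yields the paper's Remark \ref{rmk on rational cycle}, since $\left\lfloor kN\right\rfloor$ is supported on the same negative definite configuration and the identical rigidity argument applies. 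The paper's approach, by contrast, is organized around the positivity of $P$, which is the thread running through the neighboring results (Propositions \ref{vanishing of H^1(B')} and \ref{H^1 vanishing}), so the case split there is reused elsewhere.
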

\begin{proof}
We shall show that $H^0(\sshf{S}(K_S+Z))$=0, then the assertion is a consequence of Lemma \ref{criterion for rational cycle}. There are three cases:

(i) $|-K_S|$ has the fixed part $F$.
Since
\begin{equation*}
    (K_S+F+\left\lfloor N\right\rfloor)\cdot P=-M\cdot P<0,
\end{equation*}
as the proof of Proposition \ref{vanishing of H^1(B')} has shown, thus we have $H^0(\sshf{S}(K_S+F+\left\lfloor N\right\rfloor))=0$.

(ii) $|-K_S|$ has no fixed part and $K^2_S>0$. Then $-K_S$ is big and nef. Since
\begin{equation*}
    (K_S+\left\lfloor N\right\rfloor)\cdot P=K_S\cdot P<0,
\end{equation*}
by the Hodge index theorem and nefness of $P$, we deduce that $H^0(\sshf{S}(K_S+\left\lfloor N\right\rfloor))=0$.

(iii) $|-K_S|$ has no fixed part and $K^2_S=0$. If $P^2>0$, then we can proceed as (ii). Thus we can assume in addition that
\begin{equation*}
    (-K_S)\cdot P=P^2=0.
\end{equation*}
Then by the Hodge index theorem we deduce from that $P\equiv c(-K_S)$ for some rational number $c>0$ (Recall $P\nequiv 0$ by Lemma \ref{nontriviality of P}).
Now suppose to the contrary that $K_S+\left\lfloor N\right\rfloor\sim D$ for some effective divisor $D$, so $\left\lfloor N\right\rfloor\neq 0$. Since
\begin{equation*}
   D\cdot \left\lfloor N\right\rfloor= \paren{K_S+\left\lfloor N\right\rfloor}\cdot \left\lfloor N\right\rfloor=\left\lfloor N\right\rfloor^2<0,
\end{equation*}
it follows that $D$ and $\left\lfloor N\right\rfloor$ must have common components. Denote by $D'$ (resp. $\left\lfloor N\right\rfloor'$) the divisor obtained from $D$ (resp. $\left\lfloor N\right\rfloor$) by subtracting the greatest common subdivisor of $D$ and $\left\lfloor N\right\rfloor$. We have
\begin{equation*}
    K_S+\left\lfloor N\right\rfloor'\sim D'.
\end{equation*}
Again $\left\lfloor N\right\rfloor'\neq 0$. So
\begin{equation*}
    D'\cdot \left\lfloor N\right\rfloor'= \paren{K_S+\left\lfloor N\right\rfloor'}\cdot \left\lfloor N\right\rfloor'=\paren{\left\lfloor N\right\rfloor'}^2<0,
\end{equation*}
which is impossible because $D'$ and $\left\lfloor N\right\rfloor'$ have no common component. Therefore, $H^0(\sshf{S}(K_S+\left\lfloor N\right\rfloor))=0$ and the proof is complete.
\end{proof}

\begin{remark}\label{rmk on rational cycle}
From the proof, it is clear that for any positive integer $k$, it holds that $H^1(\sshf{F+{\left\lfloor kN\right\rfloor}})=0$.
\end{remark}

Next we study the positivity of $K_S+kB+A$, which will be needed in the next section.

\begin{proposition}\label{normal generation of K+kB+A}
Suppose that one of the following conditions holds:
\begin{enumerate}
  \item $X$ is smooth.
  \item $K_S+(r-1)B$ is nef.
\end{enumerate}
Then for any $0< k\le r-1$,
\begin{itemize}
  \item[(i)] the divisor $K_S+kB+A$ is very ample and normally generated.
  \item[(ii)] $(K_S+kB+A)^2>1$, unless $(S, A, B, k)=(\prj{2}, \sshf{\prj{2}}(3), \sshf{\prj{2}}(1), 1)$.
\end{itemize}
\end{proposition}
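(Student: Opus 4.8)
The plan is to establish parts (i) and (ii) by reducing everything to the positivity statement $(K_S+kB+A)\cdot C \ge 1$ for every curve $C$, combined with the general criteria for very ampleness and normal generation on anticanonical rational surfaces. First I would dispose of part (ii): writing $D:=K_S+kB+A$, I would compute $D^2 = (K_S+A)^2 + 2k(K_S+A)\cdot B + k^2 B^2$. Since $A^2\ge 7$, Proposition \ref{base point freeness of K+A} gives that $K_S+A$ is nef (indeed ample unless $K_S+A\sim 0$), so $(K_S+A)^2\ge 0$; since $B$ is pseudoeffective and $K_S+A$ is nef, $(K_S+A)\cdot B\ge 0$; and $B^2\ge 0$ would suffice but need not hold in general because of the negative part $N$. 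So the real content of (ii) is to show that $D^2\ge 2$ except in the listed $\prj{2}$ case, and for this I would use that $B=P+N$ with $P\ne 0$ nef (Lemma \ref{nontriviality of P}, available under either hypothesis, using the footnote remark that nefness of $K_X$ forces $H^0(K_S+(r-1)B)\ne 0$), so $D = (K_S+A)+kP+kN$ with $(K_S+A)+kP$ nef; the square of a nef divisor being an even nonnegative integer, the only way to fail $D^2\ge 2$ is if this nef part squares to zero and contributes nothing, which pins down $S=\prj2$, $A=\sshf{\prj2}(3)$, and then one checks directly $kN$ must vanish and $k=1$, $B=\sshf{\prj2}(1)$.

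For part (i), the heart is to prove $D\cdot C\ge 1$ for every irreducible curve $C$; once that is known, I would invoke the standard machinery: $D = (K_S+A) + kB$ with $K_S+A$ nef and base point free (Proposition \ref{base point freeness of K+A}) and $kB$ pseudoeffective, so Reider-type arguments or the results already cited for adjoint divisors on such surfaces upgrade positivity against all curves to very ampleness, and normal generation then follows from the surjectivity criteria in Lemmas \ref{surjectiveness of multiplication map2} and \ref{surjectiveness of multiplication map} exactly as in the $k$-independent part of the argument sketched in the introduction (applying Lemma \ref{surjectiveness of multiplication map2} with $L = kB+A$ and checking the two numerical inequalities $(K_S+L)\cdot(-2K_S) \ge 5$ and $(K_S+L)\cdot(-K_S)\ge 2$, which hold once $(K_S+A)\cdot(-K_S)\ge 3$ by Proposition \ref{inequality1} and $B\cdot(-K_S)\ge 0$). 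To get $D\cdot C\ge 1$: if $C\not\le \operatorname{Supp}(N)$ then $B\cdot C = P\cdot C + N\cdot C \ge N\cdot C \ge 0$ since $N\ge 0$ and $C$ meets it properly, while $(K_S+A)\cdot C\ge 1$ whenever $K_S+A$ is nef and $C$ is not contracted — and $(K_S+A)\cdot C = 0$ only for $(-1)$- or $(-2)$-type curves, where I would instead use $A\cdot C\ge 3$ and $K_S\cdot C\ge -2$ directly to get $D\cdot C\ge 1+kB\cdot C\ge 1$; if $C = E_i$ is a component of $N$, then $B\cdot E_i = (P+N)\cdot E_i = N\cdot E_i$ which may be negative, so here I would bound $kB\cdot E_i \ge -k a_i(\text{something})$ crudely and absorb it into $(K_S+A)\cdot E_i + A\cdot E_i\ge 3$ — more carefully, using $D = (K_S+A)+kP+kN$ and that $(K_S+A)+kP$ is nef, $D\cdot E_i \ge kN\cdot E_i = k(N\cdot E_i)$, which is where the argument is genuinely delicate.

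The main obstacle I expect is precisely this last point: controlling $D\cdot E_i$ for curves $E_i$ in the support of the negative part $N$ of the Zariski decomposition, since $N\cdot E_i$ can be very negative. The clean resolution should come from the Gorenstein/smoothness hypotheses feeding back through the covering: under hypothesis (1), smoothness of $X$ constrains the branch divisor $\Gamma\in|rB|$ and hence $B$ along the $E_i$, and under hypothesis (2), nefness of $K_X = \pi^*(K_S+(r-1)B)$ directly forces $(K_S+(r-1)B)\cdot E_i\ge 0$, i.e. $(r-1)N\cdot E_i \ge -K_S\cdot E_i - (r-1)P\cdot E_i = -K_S\cdot E_i$, giving $N\cdot E_i \ge -K_S\cdot E_i/(r-1)$, and then $D\cdot E_i = (K_S+A)\cdot E_i + kP\cdot E_i + kN\cdot E_i \ge A\cdot E_i + K_S\cdot E_i + kN\cdot E_i$, and balancing $A\cdot E_i\ge 3$ against $|K_S\cdot E_i|$ and the (bounded, for $k\le r-1$) negative contribution $kN\cdot E_i$ should close the estimate. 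I would therefore organize the proof around a case split on whether $C$ lies in $\operatorname{Supp}(N)$, reduce the non-$N$ case to Proposition \ref{base point freeness of K+A} and the hypotheses $A^2\ge 7$, $A\cdot C\ge 3$, and handle the $N$ case via the covering constraint above; the $\prj2$ exception in (ii) is then an easy bookkeeping check at the end.
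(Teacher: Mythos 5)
There is a genuine gap, and it sits exactly where you flagged it: controlling $(K_S+kB+A)\cdot E_i$ for $E_i$ in the support of the negative part $N$ under hypothesis (1). Your hope that ``smoothness of $X$ constrains the branch divisor $\Gamma\in|rB|$ along the $E_i$'' does not deliver a usable bound: smoothness of $X$ only forces a smooth (in particular reduced) member of $|rB|$, and such a member may well contain $E_i$ as a component, in which case $rB\cdot E_i\ge E_i^2$ with $E_i^2$ arbitrarily negative; no pointwise lower bound on $B\cdot E_i$ follows. The paper circumvents this entirely by working upstairs: it applies Reider's theorem on $X$ to $K_X+\pi^*A$ (using $(\pi^*A)^2=rA^2\ge 14$ and $\pi^*A\cdot C'=A\cdot\pi_*C'\ge 3$) to get very ampleness of $K_X+\pi^*A=\pi^*(K_S+(r-1)B+A)$, deduces that $K_S+(r-1)B+A$ is ample because $\pi$ is finite, and then obtains ampleness of $K_S+kB+A$ for all $0<k\le r-1$ from the convex combination $\frac{r-1-k}{r-1}(K_S+A)+\frac{k}{r-1}(K_S+(r-1)B+A)$ with $K_S+A$ nef by Proposition \ref{base point freeness of K+A}. (Your treatment of hypothesis (2) is essentially the paper's other convex combination and is fine.)

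The second issue is the route from ampleness to very ampleness and normal generation. The paper does not use Lemmas \ref{surjectiveness of multiplication map2} or \ref{surjectiveness of multiplication map} here; it invokes Harbourne's Theorem III.1 together with Gallego--Purnaprajna's criteria, whose numerical input is that the ample divisor $D=K_S+kB+A$ satisfies $D\cdot(-K_S)\ge 3$. Verifying that inequality is the other half of the paper's proof and is missing from yours: your appeal to ``$(K_S+A)\cdot(-K_S)\ge 3$ by Proposition \ref{inequality1}'' overlooks the exceptional case $K_S^2=1$, $A=-3K_S$ where that product equals $2$ (the paper handles it by noting $(r-1)B\cdot(-K_S)\ge 1$ there), and your use of $B\cdot(-K_S)\ge 0$ is unjustified when $|-K_S|$ has a fixed part, which is why the paper argues that case by intersecting with $M+F$ instead. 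Part (ii) is, as the paper notes, immediate from (i), since a very ample $D$ with $D^2=1$ forces $S=\mathbb{P}^2$ and $D=\mathscr{O}_{\mathbb{P}^2}(1)$, whence the listed quadruple.
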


\begin{proof}
(ii) is evident once (i) is proved.
If $S=\prj{2}$, the assertion (i) is trivial; so we assume $S\neq \prj{2}$.

(1) Suppose that $X$ is smooth. By the assumption on $A$ and \cite[Thm.~1]{Reider88}, we deduce that $K_X+\pi^*A$ is very ample, so $K_S+(r-1)B+A$ is ample, as $\pi$ is finite. In view of Proposition \ref{base point freeness of K+A}, for any $0< k\le r-1$,
\begin{equation}\label{decomposition of adjoint divisor}
    K_S+kB+A=\frac{r-1-k}{r-1}(K_S+A)+\frac{k}{r-1}(K_S+(r-1)B+A)
\end{equation}
is ample. Once we show that $(K_S+kB+A)\cdot (-K_S)\ge 3$, then the assertion follows from \cite[Thm.~III.1]{Harbourne97} and \cite[Thm.~1.3, Prop.~1.22]{GalPurna01}.

To this end, we first prove $(K_S+(r-1)B+A)\cdot (-K_S)\ge 3$ as follows. Recall by Proposition \ref{inequality1}, $(K_S+A)\cdot (-K_S)\ge 2$. If $(K_S+A)\cdot (-K_S)=2$, then it hold that $K^2_S=1, A=-3K_S$. Therefore
\begin{equation*}
    (K_S+(r-1)B+A)\cdot(-K_S)=2+(r-1)B\cdot(-K_S)\ge 3.
\end{equation*}
If $(K_S+A)\cdot (-K_S)\ge 3$ and $|-K_S|$ has no fixed part, then
\begin{equation*}
    (K_S+(r-1)B+A)\cdot(-K_S)\ge (K_S+A)\cdot (-K_S)\ge 3.
\end{equation*}
If $(K_S+A)\cdot (-K_S)\ge 3$ and $|-K_S|$ has a fixed part $F$, then
\begin{equation*}
   (K_S+(r-1)B+A)\cdot(-K_S)=(K_S+(r-1)B+A)\cdot (M+F)\ge 2.
\end{equation*}
Suppose that the intersection number was 2. It is then evident that $(K_S+(r-1)B+A)\cdot M=1$, $F$ is irreducible and $F\cdot B=0$, which implies that $F\subseteq \Gamma\in |rB|$. Hence $rB\cdot M\ge F\cdot M\ge 2$ (see Lemma \ref{M.F}). In particular, $B\cdot M>0$. It follows that $(K_S+(r-1)B+A)\cdot M=(K_S+A)\cdot M+(r-1)B\cdot M\ge 2$, a contradiction.

For general $k$, we have
\begin{eqnarray*}
    &&(K_S+kB+A)\cdot (-K_S)\\
    &=& \frac{r-1-k}{r-1}(K_S+A)\cdot (-K_S)+\frac{k}{r-1}(K_S+(r-1)B+A)\cdot (-K_S)\\
    &\ge& 2\paren{\frac{r-1-k}{r-1}}+3\paren{\frac{k}{r-1}}\\
    &>&2.
\end{eqnarray*}

(2) Next suppose that $K_S+(r-1)B$ is nef. Since
\begin{equation*}
    K_S+kB+A=\frac{k}{r-1}(K_S+(r-1)B)+\frac{r-1-k}{r-1}(K_S+A)+\frac{k}{r-1}A,
\end{equation*}
we get that $K_S+kB+A$ is ample. It follows that
\begin{eqnarray*}p
   &&(K_S+kB+A)\cdot(-K_S)\\
   &\ge & \frac{r-1-k}{r-1}(K_S+A)\cdot(-K_S)+ \frac{k}{r-1}A\cdot(-K_S) \\
   &\ge &  2\paren{\frac{r-1-k}{r-1}}+ 3\paren{\frac{k}{r-1}}\quad\quad\quad\quad\text{by Proposition \ref{inequality1}}\\
   &>& 2.
\end{eqnarray*}
Again by \cite[Thm.~III.1]{Harbourne97} and \cite[Thm.~1.3, Prop.~1.22]{GalPurna01}, $K_S+kB+A$ is very ample and normally generated. This completes the proof.
%Lastly,
%\begin{eqnarray*}
%   &&(K_S+kB+A)^2\\
%   &\ge& \paren{\frac{r-1-k}{r-1}}^2(K_S+A)^2+2\paren{\frac{r-1-k}{r-1}}\paren{\frac{k}{r-1}}(K_S+A)\cdot A+\paren{\frac{k}{r-1}}^2A^2\\
%   &\ge &  \paren{\frac{r-1-k}{r-1}}^2+2\paren{\frac{r-1-k}{r-1}}\paren{\frac{k}{r-1}}+7\paren{\frac{k}{r-1}}^2\\
%   &=&1+6\paren{\frac{k}{r-1}}^2\\
%   &>& 1.
%\end{eqnarray*}
\end{proof}

The technical lemma on certain intersection numbers will be needed in the next section for applying Lemma \ref{surjectiveness of multiplication map2}.

\begin{lemma}\label{intersection inequality}
Let $L$ be an ample divisor on $S$. Then $L\cdot (-K_S+B-Z)\ge 2$. Moreover, if $|-K_S|$ has the fixed part and $L^2>1$, then
$ L\cdot (-2K_S+B-Z)\ge 5$.
\end{lemma}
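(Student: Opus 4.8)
The plan is to reduce both inequalities to the $\mathbb{Q}$-divisor identity
\[
  -K_S+B-Z=M+P+\{N\},
\]
which is immediate from $-K_S\sim F+M$ (with the convention $F=\emptyset$, $M=-K_S$ when $|-K_S|$ has no fixed part), $B=P+N$ and $Z=F+\lfloor N\rfloor$. It yields $-2K_S+B-Z=-K_S+(M+P+\{N\})$, and $-2K_S+B-Z=F+2M+P+\{N\}$ when $F\neq\emptyset$.

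For the first inequality I would intersect with $L$ and split $L\cdot(-K_S+B-Z)=L\cdot M+L\cdot(P+\{N\})$. Since $M$ is a nonzero effective divisor (as $\dim|-K_S|\ge1$) and $L$ is ample, $L\cdot M\ge1$. For the second term, the key observation is that $P+\{N\}=B-\lfloor N\rfloor$ is an \emph{integral} divisor, so $L\cdot(P+\{N\})\in\mathbb{Z}$, while $L\cdot(P+\{N\})\ge L\cdot P>0$ because $\{N\}$ is effective, $P$ is nef and nonzero (Lemma~\ref{nontriviality of P}), and $L$ is ample. Hence $L\cdot(P+\{N\})\ge1$ and $L\cdot(-K_S+B-Z)\ge2$.

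For the second inequality, the identity and the first part give $L\cdot(-2K_S+B-Z)=L\cdot(-K_S)+L\cdot(-K_S+B-Z)\ge L\cdot(-K_S)+2$, so it suffices to prove $L\cdot(-K_S)\ge3$ when moreover $F\neq\emptyset$ and $L^2\ge2$. Since $F$ and $M$ are both nonzero effective and $L$ is ample, $L\cdot(-K_S)=L\cdot F+L\cdot M\ge2$, and I would rule out equality. If $L\cdot(-K_S)=2$ then $L\cdot M=1$; as $L$ is ample the general member $\sum_{i=1}^s M_i$ of $|M|$ is then irreducible (otherwise $L\cdot M$ would be a sum of $s\ge2$ positive integers), so $s=h^0(\sshf{M})=1$, and Hodge index $M^2\,L^2\le(L\cdot M)^2=1$ together with $L^2\ge2$ and the nefness of $M$ forces $M^2=0$. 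Now $M$ is base point free (here $F\neq\emptyset$) with $M^2=0$, so the morphism it defines has $1$-dimensional image, and its Stein factorization is a fibration $p\colon S\to\mathbb{P}^1$ (the base is rational since $S$ is); since $p_a(M)=0$ (because $M^2=0$ and $M\cdot F=2s=2$ by Lemma~\ref{M.F}) the divisor $M$ is a fibre of $p$ of arithmetic genus $0$, and $L\cdot M=1$ with $L$ ample then forces every fibre of $p$ to be an irreducible reduced rational curve. Thus $p$ is a $\mathbb{P}^1$-bundle, $S\cong\mathbb{F}_e$ and $K_S^2=8$; but Hodge index gives $(-K_S)^2\le(L\cdot(-K_S))^2/L^2=4/L^2\le2$, contradicting $(-K_S)^2=8$. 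Therefore $L\cdot(-K_S)\ge3$ and $L\cdot(-2K_S+B-Z)\ge5$.

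I expect the only real difficulty to be the exclusion of $L\cdot(-K_S)=2$: one has to extract a $\mathbb{P}^1$-ruling of $S$ from the combination $M^2=0$, $L\cdot M=1$, and then use that $(-K_S)^2=8$ is far too large to coexist with $L$ ample, $L^2\ge2$ and $L\cdot(-K_S)=2$. The boundary case $F=\emptyset$ requires nothing extra, since only the first inequality is claimed there and its proof goes through verbatim with $M=-K_S$.
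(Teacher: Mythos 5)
Your proof is correct and follows essentially the same route as the paper's: both rest on the decomposition $-K_S+B-Z=M+P+\{N\}$, using integrality together with $L\cdot P>0$ (via Lemma \ref{nontriviality of P}) for the first bound, and a Hodge-index analysis of the degenerate case $L\cdot F=L\cdot M=1$ for the second. The only real divergence is how the resulting ruled-surface case is excluded: the paper observes that on $\mathbb{F}_e$ with $e\ge 3$ the moving part of $|-K_S|$ has $M^2=e+4>0$, contradicting $M^2=0$, whereas you play $K_{\mathbb{F}_e}^2=8$ against the Hodge-index bound $(-K_S)^2\le (L\cdot(-K_S))^2/L^2\le 2$ --- an equally valid, and arguably cleaner, finish.
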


\begin{proof}
Since
\begin{equation*}
    L \cdot(-K_S+B-Z)=L\cdot(P+M+\{N\})\ge  L\cdot(P+M)>1,
\end{equation*}
we conclude that $L\cdot(-K_S+B-Z)\ge 2$.

For the second inequality, suppose to the contrary that
$L\cdot (-2K_S+B-Z)\le 4$. Then $L\cdot F=L\cdot M=1$, implying that $F$ is integral and so is every member of $|M|$. We claim that $M^2>0$. For otherwise, $|M|: S\rightarrow \mathbb{P}^1$ is a rational ruled surface with invariant $e\ge 3$. Then computation shows that $M^2=e+4>0$, a contradiction. Then, Hodge index theorem forces that $L^2=M^2=1$, contradicting the assumption that $L^2>1$. Therefore $L\cdot (-2K_S+B-Z)\ge 5$.
\end{proof}

The following result will be needed in the next section for applying Lemma \ref{surjectiveness of multiplication map}.
\begin{proposition}\label{H^1 vanishing}
Keep the assumption as in Proposition $\ref{normal generation of K+kB+A}$. Then for any irreducible component $E$ of $Z$ and any $0\le k\le r-1$, we have the vanishing
\begin{equation*}
    H^1(S, \sshf{S}(K_S+kB+A-E))=0.
\end{equation*}
\end{proposition}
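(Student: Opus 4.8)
The plan is to use the exact sequence
\[
0\to \sshf{S}(K_S+kB+A-E)\to \sshf{S}(K_S+kB+A)\to \sshf{E}(K_S+kB+A)\to 0,
\]
together with the fact that $K_S+kB+A$ has vanishing $H^1$ (it is very ample and normally generated by Proposition \ref{normal generation of K+kB+A}, and on an anticanonical rational surface such divisors have no higher cohomology by the results of Harbourne cited there), to reduce the problem to showing $H^1(E,\sshf{E}(K_S+kB+A))=0$. Since $E$ is an irreducible component of the rational cycle $Z$, by Proposition \ref{rational cycle} and the Artin/Theorem \ref{Artin thm} discussion $E\cong\mathbb{P}^1$, so the target vanishing is equivalent to $\deg\left((K_S+kB+A)|_E\right)\ge -1$, i.e. to $(K_S+kB+A)\cdot E\ge -1$.

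The key step is therefore the numerical estimate $(K_S+kB+A)\cdot E\ge -1$ for every component $E$ of $Z=F+\lfloor N\rfloor$ and every $0\le k\le r-1$. First I would handle $A\cdot E$: since $E$ is an irreducible curve and $A$ satisfies $A\cdot C\ge 3$ for every curve $C$, we have $A\cdot E\ge 3$. Next I would deal with $K_S\cdot E$: because $E\cong\mathbb{P}^1$, adjunction gives $K_S\cdot E=-2-E^2$, and for components of $Z$ one controls $E^2$ — components of $F$ satisfy $E^2\ge -2$ wait, more carefully, for any smooth rational curve $E^2\ge$ can be very negative, so instead I would write $K_S\cdot E=(K_S+E)\cdot E-E^2$ and use that $E\le Z$ is a sub-cycle of a rational cycle, whence $p_a(E)\le 0$ forces nothing new; the usable bound is rather $(-K_S)\cdot E\ge$ something, which is false for $(-1)$-curves in $N$. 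The honest route is: $(K_S+kB+A)\cdot E = (K_S+A)\cdot E + kB\cdot E$, and I would bound $B\cdot E$ from below using the Zariski decomposition $B=P+N$, so $B\cdot E=P\cdot E+N\cdot E\ge N\cdot E$ (as $P$ is nef), and for $E$ a component of $\lfloor N\rfloor$ or of $F$ one estimates $N\cdot E$; for $E\subseteq\mathrm{Supp}(N)$ this can be negative, but $\lfloor N\rfloor\cdot E\ge N\cdot E - (\text{something})$, so I expect to need the slightly finer statement that $kB\cdot E\ge \lfloor kN\rfloor\cdot E$ up to a bounded error, together with $(K_S+A)\cdot E\ge 1$ (which follows from $A\cdot E\ge 3$ and $K_S\cdot E=-2-E^2$ once $E^2\ge 0$) — and for the finitely many negative-self-intersection components a direct check.

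The cleanest organizing principle, which I would actually carry out, is to split $Z$'s components into those meeting a fixed general $M$ and those not, and to invoke Lemma \ref{intersection inequality} and Lemma \ref{normal crossing}: by Lemma \ref{normal crossing}, distinct components of $Z$ meet in at most one point, which bounds $E\cdot(Z-E)$ and hence, via $K_S+Z\equiv K_S+F+\lfloor N\rfloor$ having the negativity established in Proposition \ref{rational cycle}, gives $(K_S+Z)\cdot E\le 0$, i.e. $K_S\cdot E\le -E\cdot Z\le -E^2$, equivalently $-E^2\le -K_S\cdot E - E\cdot(Z-E)$. Then $(K_S+kB+A)\cdot E\ge (K_S+A)\cdot E\ge A\cdot E + K_S\cdot E$; writing $K_S\cdot E=-2-E^2$ and bounding $E^2$ via the sub-cycle-of-$Z$ relations reduces everything to $A\cdot E\ge 3$, which suffices unless $E^2$ is very negative, and in that sub-case one uses $kB\cdot E\ge P\cdot E\ge 0$ plus the fact (from the Zariski decomposition, property (iii)) that any such $E$ appearing in $N$ has $P\cdot E=0$, so $kB\cdot E=kN\cdot E$ which one bounds against the negative-definite intersection matrix. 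I expect the main obstacle to be exactly this book-keeping: ensuring the bound $(K_S+kB+A)\cdot E\ge -1$ holds uniformly for components of the possibly non-reduced $\lfloor N\rfloor$ where $E^2$ can be arbitrarily negative, and pinning down that the $kB$-contribution is large enough to compensate. I would resolve it by observing $kB\cdot E\ge 0$ when $E\not\subseteq\mathrm{Supp}(N)$ (as $B$ is effective, wait $B$ need not be effective — but $B$ is pseudoeffective with Zariski decomposition $P+N$, and $P\cdot E\ge 0$, $N\cdot E\ge 0$ when $E\not\subseteq\mathrm{Supp} N$), and when $E\subseteq\mathrm{Supp}(N)$ by combining $P\cdot E=0$ with the estimate of $N\cdot E$ forced by $B-Z$ having vanishing $H^1$ (Proposition \ref{vanishing of H^1(B')}) and the expression $B-Z=K_S+(P+M)+\{N\}$.
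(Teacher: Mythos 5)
There is a genuine error at the very first step, and it undermines the whole plan. From the exact sequence
\begin{equation*}
    0\to \sshf{S}(K_S+kB+A-E)\to \sshf{S}(K_S+kB+A)\to \sshf{S}(K_S+kB+A)|_E\to 0,
\end{equation*}
the vanishing $H^1(\sshf{S}(K_S+kB+A))=0$ identifies $H^1(\sshf{S}(K_S+kB+A-E))$ with the \emph{cokernel of the restriction map} $H^0(\sshf{S}(K_S+kB+A))\to H^0((K_S+kB+A)|_E)$, not with $H^1((K_S+kB+A)|_E)$. The group $H^1$ of the restriction only controls $H^2(\sshf{S}(K_S+kB+A-E))$ one step further along the long exact sequence. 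So your reduction to the degree condition $(K_S+kB+A)\cdot E\ge -1$ on $E\iso\prj{1}$ proves the wrong vanishing. A sanity check confirms this: $K_S+kB+A$ is ample by Proposition \ref{normal generation of K+kB+A}, so $(K_S+kB+A)\cdot E\ge 1$ for \emph{every} curve $E$, and your argument would make the proposition an immediate triviality needing none of the hypotheses on $Z$ --- whereas surjectivity of the restriction of sections onto a component of $\left\lfloor N\right\rfloor$ is exactly the nontrivial content. The remainder of your proposal, the attempted numerical estimate of $(K_S+kB+A)\cdot E$, is therefore aimed at a condition that is both automatic and insufficient, and you correctly sense (but do not resolve) that the bookkeeping over the non-reduced $\left\lfloor N\right\rfloor$ does not close up.

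For contrast, the paper's proof first disposes of components $E$ of $F$ using that $-K_S-E$ is effective together with \cite[Lemma 5.1]{Song16}, and then, for $E\subseteq\text{Supp}\left\lfloor N\right\rfloor$, introduces the auxiliary rational cycle $C=\left\lfloor kN\right\rfloor-E$. The point is the identity
\begin{equation*}
    K_S+kB+A-E-C=K_S+(kP+A)+\{kN\},
\end{equation*}
which exhibits $\shf{L}(-C)$ (with $\shf{L}=\sshf{S}(K_S+kB+A-E)$) as $K_S$ plus an ample $\mathbb{Q}$-divisor plus a fractional part, so Kawamata--Viehweg gives $H^1(\shf{L}(-C))=0$; then $H^1(\shf{L}|_C)=0$ follows from Harbourne's lemma on rational cycles because $\shf{L}\cdot\Gamma\ge 0$ for each component $\Gamma$ of $C$ (here Lemma \ref{normal crossing} and ampleness of $K_S+kB+A$ enter), and the two vanishings combine in the restriction sequence for $C$. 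If you want to repair your approach, this two-step dévissage through an auxiliary cycle --- rather than restriction to $E$ itself --- is the missing idea.
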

\begin{proof}
Again, we can assume that $S\neq \mathbb{P}^2$. In view of Proposition \ref{base point freeness of K+A} and \ref{normal generation of K+kB+A}, $\sshf{S}(K_S+kB+A)$ is ample and base point free; so according to \cite[Lemma 5.1]{RS16}, the vanishing holds so long as $-K_S-E$ is effective. Therefore, we henceforth suppose $E\subseteq\text{Supp}{\left\lfloor N\right\rfloor}$.

Put $\shf{L}=\sshf{S}(K_S+kB+A-E)$. We will take a rational cycle $C$ such that $H^1(\shf{L}(-C))=0$, and $\shf{L}$ intersects each component of $C$ nonnegatively. Hence $H^1(\shf{L}|_{C})=0$ by \cite[Lemma 7]{Harbourne96}. Then the vanishing of $H^1(\shf{L})$ will follow from the exact sequence
\begin{equation*}
    \ses{\shf{L}(-C)}{\shf{L}}{\shf{L}|_C}.
\end{equation*}

To this end, it suffices to take $C=\left\lfloor kN\right\rfloor-E$, which is rational (cf.~Remark \ref{rmk on rational cycle}). Since
\begin{equation*}
    K_S+kB+A-E-C=K_S+(kP+A)+\{kN\},
\end{equation*}
by Kawamata--Viehweg vanishing, $H^1(\shf{L}(-C))=0$ holds. On the other hand, for each component $\Gamma$ of $\left\lfloor kN\right\rfloor$, if $\Gamma\neq E$, then $E\cdot \Gamma\le 1$ by Lemma \ref{normal crossing}. Otherwise $\Gamma\cdot E=E^2<0$. Thus, in any event,
\begin{equation*}
    (K_S+kB+A-E)\cdot \Gamma\ge 0.
\end{equation*}
This completes the proof.
\end{proof}

%\begin{remark}
%A simple calculation shows that if $-K_S-E$ is not effective, then $E^2=-1, E\cdot M=1, E\cdot F=0$, and $\dim |-K_S|=1$, which is a situation we cannot rule %out.
%\end{remark}

\section{Proof of Theorem \ref{main result}}

Keep the notations as in Section 3.
\begin{proof}
Note to begin with that $K_X+\pi^*A=\pi^*(K_S+(r-1)B+A)$ is ample and base point free, thanks to Proposition \ref{normal generation of K+kB+A} (i)
and the finiteness of $\pi$. Therefore, the very ampleness of $K_X+\pi^*A$ will be a consequence of normal generation by \cite[p.~38]{Mumford69} (In case $X$ is smooth, very ampleness alternatively follows from Reider's criterion \cite{Reider88}).

To prove the normal generation, we shall show that the natural map
\begin{equation}\label{surjectivity}
    H^0(\sshf{X}\paren{k(K_X+\pi^*A)})\otimes H^0(\sshf{X}\paren{K_X+\pi^*A})\rightarrow  H^0(\sshf{X}\paren{(k+1)(K_X+\pi^*A)})
\end{equation}
surjects for all $k\ge 1$.

For $k\ge 3$, the surjectivity of (\ref{surjectivity}) simply follows from  Castelnuovo--Mumford regularity property. In fact, for $i=1, 2$, we have
\begin{eqnarray*}
  && H^i(\sshf{X}\paren{(k-i)(K_X+\pi^*A)})\\
  &\iso& \bigoplus^{r-1}_{j=0}  H^i(\sshf{S}\paren{(k-i)(K_S+(r-1)B+A)-jB})\\
  &=&0
\end{eqnarray*}
by Proposition \ref{normal generation of K+kB+A} and \cite[Thm~III.1]{Harbourne97}. So it remains to treat the cases $k=1, 2$.  We only treat the case $k=1$, since the case $k=2$ is analogous and easier.

Pushing down (\ref{surjectivity}) to $S$, it is sufficient to show the surjectivity of
\begin{equation*}
    H^0(\sshf{S}(K_S+kB+A))\otimes  H^0(\sshf{S}(K_S+kB+A))\xrightarrow{f_1}  H^0(\sshf{S}(2K_S+2kB+2A))
\end{equation*}
for all $\left\lceil\frac{r-1}{2}\right\rceil\le k\le r-1$,
and that of
\begin{equation*}
    H^0(\sshf{S}(K_S+(k+1)B+A))\otimes H^0(\sshf{S}(K_S+kB+A))\xrightarrow{f_2} H^0(\sshf{S}(2K_S+(2k+1)B+2A)),
\end{equation*}
for all $\left\lceil\frac{r-2}{2}\right\rceil\le k\le r-2$.

According to Proposition \ref{normal generation of K+kB+A}, $f_1$ is surjective. For $f_2$, we will take a curve $C$ on $S$ (to be specified later) that makes rows in the commutative diagram exact
\begin{equation}\label{commutative diagram}
\includegraphics[trim=50mm 213mm 10mm 43mm, clip]{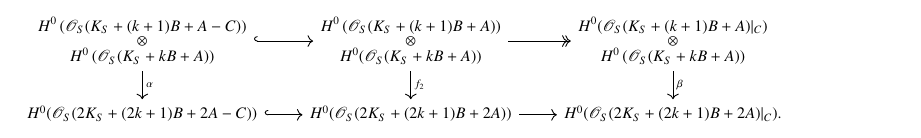}
%\begin{tikzpicture}[baseline=(a).base]
%\node[scale=0.77] (a) at (0,0){
%\begin{tikzcd}
%0\arrow[r] & \substack{{\displaystyle H^0\paren{K_S+L+B-C_0}} \\{\displaystyle \otimes} \\ {\displaystyle H^0\paren{K_S+L}}}\arrow[d, "\alpha"]\arrow[r] & \substack{{\displaystyle H^0\paren{K_S+L+B}} \\ {\displaystyle \otimes} \\ {\displaystyle H^0(K_S+L)}}\arrow[d, "f_2"]\arrow[r] & \substack{{\displaystyle H^0((K_S+L+B)|_{C_0})}\\ {\displaystyle\otimes} \\ {\displaystyle H^0\paren{K_S+L}}}\arrow[d, "\beta"]\arrow[r] & 0\\
%0\arrow[r]& H^0(2K_S+2L+B-C_0)\arrow[r] & H^0(2K_S+2L+B)\arrow[r] & H^0((2K_S+2L+B)|_{C_0}), &
%\end{tikzcd}};
%\end{tikzpicture}
\end{equation}
Then by the snake lemma, the surjectivity of $f_2$ will follow, once we establish the surjectivity of the column maps $\alpha$ and $\beta$.

If $S=\mathbb{P}^2$, then $f_2$ is clearly surjective. We assume henceforth that $S\neq \mathbb{P}^2$. There are three cases:
\vspace{0.2cm}

Case I: $|-K_S|$ has the fixed part.

\vspace{0.2cm}

Put $|-K_S|=F+|M|$. In this case, we take $C=Z$, as defined in (\ref{auxiliary cycle}), and put $B'=B-C$. Since
\begin{equation*}
    K_S+kB+A+B'\sim K_S+(K_S+kB+A)+(P+M)+\{N\}
\end{equation*}
and $K_S+kB+A$ is ample by Proposition \ref{normal generation of K+kB+A}(i),
Kawamata--Viehweg vanishing implies that $H^1(\sshf{S}(K_S+kB+A+B'))=0$, and hence the top row of diagram (\ref{commutative diagram}) is exact.

To show the surjectivity of $\alpha$, we will apply Lemma \ref{surjectiveness of multiplication map2}. It has been shown that $H^1(\sshf{S}(B'))=0$ in Proposition \ref{vanishing of H^1(B')}. If $k>0$, then by Lemma \ref{intersection inequality} and Proposition \ref{normal generation of K+kB+A}(ii), we have $(K_S+kB+A)\cdot(-K_S+B')\ge 2$ and $(K_S+kB+A)\cdot (-2K_S+B')\ge 5$. When $k=0$, verification for the second intersection inequality is slightly different. Since
\begin{equation*}
    (K_S+A)\cdot(-2K_S+B')\ge(K_S+A)\cdot (F+2M+P),
\end{equation*}
it suffices to show that $(K_S+A)\cdot F\ge 2$. To this end, we can assume $F$ is integral. But then as a fixed smooth rational curve on $S$, $F^2<0$. Using the adjunction formula and the assumption on $A$, we deduce that
\begin{equation*}
    (K_S+A)\cdot F=-2-F^2+A\cdot F\ge 1-F^2\ge 2
\end{equation*}
as desired.

For the surjectivity of $\beta$, we will apply Lemma \ref{surjectiveness of multiplication map}. Since $H^1(\sshf{C})=0$ by Proposition \ref{rational cycle} and since for each component $\Gamma_i$ of $C$, $H^1(\sshf{S}(K_S+kB+A-\Gamma_i))=0$ holds by Proposition \ref{H^1 vanishing}, we can apply Lemma \ref{surjectiveness of multiplication map} by setting $L_1=\sshf{S}(K_S+(k+1)B+A)$ and $L_2=\sshf{S}(K_S+kB+A)$.
\vspace{0.2cm}

Case II: $|-K_S|$ has no fixed part and $K^2_S>0$.

\vspace{0.2cm}

In this case, $-K_S$ is big and nef. We take $C=\left\lfloor N\right\rfloor$, which is a rational cycle by Proposition \ref{rational cycle}. Since
\begin{equation*}
    B'=B-C\sim K_S+(-K_S+P)+\{N\},
\end{equation*}
it follows that $H^1(\sshf{S}(B'))=0$ by Kawamata--Viehweg vanishing. By Proposition \ref{inequality1}, we see that
\begin{equation*}
    (K_S+kB+A)\cdot(-K_S)\ge (K_S+A)\cdot (-K_S)\ge 2;
\end{equation*}
and since $B'$ is pseudo-effective, it holds that $(K_S+kB+A)\cdot B'\ge 1$.
So the conditions for intersection numbers in Lemma \ref{surjectiveness of multiplication map2} hold in this setting, and hence $\alpha$ is surjective. The surjectivity of $\beta$ also follows from Lemma \ref{surjectiveness of multiplication map} in a similar fashion.
\vspace{0.2cm}

Case III: $|-K_S|$ has no fixed part and $K^2_S=0$.

\vspace{0.2cm}

In this case, $-K_S$ is nef, and hence base point free. We have an elliptic fibration $\pi: S\xrightarrow{|-K_S|} \prj{1}$ with connected fibres. Note that $\pi$ has (finitely many) singular fibers, because $\chi_{\text{top}}(S)=12\neq 0$. Also note that $(K_S+A)\cdot (-K_S)\ge 3$ by Proposition \ref{inequality1}.

If $H^1(\sshf{S}(B-\left\lfloor N\right\rfloor))=0$, we take $C=\left\lfloor N\right\rfloor$. Then as in Case II,  we are done. Therefore we assume $H^1(\sshf{S}(B-\left\lfloor N\right\rfloor))\neq 0$. This nonvanishing implies that
 \begin{equation*}
    P\cdot (-K_S)=P^2=0.
 \end{equation*}
We deduce from the Hodge index theorem that $P\equiv c(-K_S)$ for some rational number $c>0$. Let $E\in |-K_S|$ be a general fibre of $\pi$, we have that
\begin{equation*}
   \deg(\sshf{S}(B)|_E)= B\cdot (-K_S)=\frac{1}{c}(P+N)\cdot P=0.
\end{equation*}
We discuss by cases:
\vspace{0.2cm}

(i) $\sshf{S}(B)|_E\nsimeq \sshf{E}$, but topologically trivial.

\vspace{0.2cm}

In this subcase, we take $C=\left\lfloor N\right\rfloor+\left\lfloor c\right\rfloor E$. Then
\begin{equation*}
    B':=B-C\sim K_S+E'+\{c\}E+\{N\},
\end{equation*}
where $E'$ is another smooth member of $|-K_S|$, distinct from $E$.

The surjectivity of $\alpha$ will again follow from Lemma \ref{surjectiveness of multiplication map2}. We leave out the verification for intersection conditions in Lemma \ref{surjectiveness of multiplication map2}, but focus on showing that $H^1(\sshf{S}(B'))=0$.

First we assume that $\{N\}$ is simple normal crossing. Let $U\subset \prj{1}$ be the maximal open subset, over which $\pi$ is smooth. For each closed point $t\in U$, denote by $S_t$ the fibre over $t$. Since $\sshf{S}(B')|_E\iso \sshf{S}(B)|_E$ is nontrivial, thus
\begin{equation*}
    h^0(\sshf{S}(B')|_{S_t})=h^1(\sshf{S}(B')|_{S_t})=0.
\end{equation*}
Then by Grauert theorem \cite[III 12.9]{Hartshorne77}, $R^i\pi_*\sshf{S}(B')|_U=0$, for each $i\ge 0$. Thanks to the torsion freeness of the direct images $R^i\pi_*\sshf{S}(B')$, see \cite[Thm 1.1]{Fujino09}, we have that $R^i\pi_*\sshf{S}(B')=0$ for each $i$. By the Leray spectral sequence $E^{i, j}_2:=H^j(\prj{1}, R^i\pi_*\sshf{S}(B'))\Rightarrow H^{i+j}(S, \sshf{S}(B'))$, we obtain that $H^1(\sshf{S}(B'))=0$.

The general case for $\{N\}$ can be reduced to the simple normal crossing case by successive blowing ups as follows. For simplicity, assume that the non simple normal crossing locus for $\{N\}$ is a single point $x$, then after one blowing-up at $x$, we get a birational morphism $\mu: \tilde{S}\rightarrow S$ with the exceptional divisor $G$, such that $\mu^*\{N\}$ has simple normal crossing support. It holds that
\begin{equation*}
    \mu^*B'-aG\sim K_{\tilde{S}}+\overline{E'}+\{c\}\overline{E}+\overline{\{N\}}+sG,
\end{equation*}
where $\overline{*}$ denotes the strict transform of $*$, $a\ge 0$ is some integer, and $0\le s<1$, with the divisor $\overline{E'}+{c}\overline{E}+\overline{\{N\}}+sG$ being simple normal crossing. Thus applying the above argument to $\mu^*B'-aG$ on $\pi\circ\mu: \tilde{S}\rightarrow \prj{1}$ yields that $H^1(\tilde{S}, \sshf{\tilde{S}}(\mu^*B'-aG))=0$.

Now the sequence $\ses{\sshf{S}(B')\otimes \frak{m}^a_x}{\sshf{S}(B')}{\sshf{S}(B')\otimes\sshf{S}/{\frak{m}^a_x}}$ induces the surjection
\begin{equation*}
    H^1(\sshf{S}(B')\otimes \frak{m}^a_x)\rightarrow H^1(\sshf{S}(B'))\rightarrow 0.
\end{equation*}
We conclude that $H^1(\sshf{S}(B'))=0$ by observing that $ H^1(\sshf{S}(B')\otimes \frak{m}^a_x)\iso H^1(\tilde{S}, \sshf{\tilde{S}}(\mu^*B'-aG))$.

For the surjectivity of $\beta$, we apply Lemma \ref{surjectiveness of multiplication map} for $\left\lfloor N\right\rfloor$ ($\left\lfloor N\right\rfloor$ is rational by Proposition \ref{rational cycle}), and apply \cite[Prop. 5.4]{RS16} for $\left\lfloor c\right\rfloor E$, respectively.

\vspace{0.2cm}

(ii) $\sshf{S}(B)|_E\iso \sshf{E}$.

\vspace{0.2cm}

By a similar argument as in (i), we see $\pi_*(\sshf{S}(B))\iso \sshf{\prj{1}}(-m)$ is a line bundle for some $m\in \mathbb{Z}$. Therefore, by the projection formula, $\pi_*(\sshf{S}(B-mK_S))\iso \sshf{\prj{1}}$. The nonzero natural morphism
\begin{equation*}
    \sshf{S}\iso \pi^*\sshf{\prj{1}}\iso \pi^*\pi_*(\sshf{S}(B-mK_S))\rightarrow \sshf{S}(B-mK_S)
\end{equation*}
gives rise to a nonzero section $s\in \Gamma(S, \sshf{S}(B-mK_S))$, whose zero locus
\begin{equation*}
    (s)_0=\Gamma=\sum_i a_i\Gamma_i,
\end{equation*}
where $a_i\ge 0$ are integers and $\Gamma_i$ are irreducible. So we get the linear equivalence
\begin{equation*}
    B\sim mK_S+\Gamma.
\end{equation*}
Since $\Gamma_i\cdot (-K_S)=0$ and $h^0(\sshf{S}(\Gamma_i))=1$, we deduce that $\Gamma_i$ is contained in some reducible singular fibres. Moreover, by Lemma \ref{criterion for rational cycle}, $\Gamma_i\iso \prj{1}$.

Claim A: $\Gamma$ is a rational cycle.

\begin{proof}[Proof of Claim A]
From above, we know that
\begin{equation*}
    (B-mK_S)+K_S\sim K_S+\Gamma.
\end{equation*}
Suppose that $h^0(\sshf{S}(K_S+\Gamma))>0$. Then $h^0(\sshf{S}(B-mK_S))\ge h^0(\sshf{S}(-K_S))\ge 2$. This contradicts the fact that $\pi_*(\sshf{S}(B-mK_S))\iso \sshf{\prj{1}}$. Therefore $H^0(\sshf{S}(K_S+\Gamma))=0$, and the assertion follows from Lemma \ref{criterion for rational cycle}.
\end{proof}

Claim B: $h^0(\sshf{S}(l\Gamma))=1$ for each $l\ge 1$.

\begin{proof}[Proof of Claim B]
Put $\shf{L}=\sshf{S}(\Gamma)$. We first show that $h^1(S, \shf{L}^l)=1$ for all $l\ge 0$ by induction on $l$. Suppose $h^1(\shf{L}^l)=1$ for some $l\ge 1$. The exact sequence
\begin{equation*}
    \ses{\sshf{S}}{\sshf{S}(\Gamma)}{\sshf{\Gamma}(\Gamma)}
\end{equation*}
yields the exact sequence
\begin{equation*}
    H^1(S, \shf{L}^l)\rightarrow H^1(S, \shf{L}^{l+1})\rightarrow H^1(\shf{L}^{l+1}|_{\Gamma}).
\end{equation*}
Since the intersection $\shf{L}$ with each component $\Gamma_i$ is zero and $\Gamma$ is a rational cycle by Claim A, it follows that $H^1(\shf{L}^{l+1}|_{\Gamma})=0$ by a result of Harbourne \cite[Lemma 7]{Harbourne96}. Then by the induction hypothesis, we get that $H^1(S, \shf{L}^{l+1})=0$.

It is evident that $h^2(\shf{L}^l)=0$, so the Riemann--Roch theorem implies that
\begin{equation*}
    h^0(\shf{L}^l)=\chi(\shf{L}^l)=\frac{(l\Gamma)\cdot(l\Gamma-K_S)}{2}+1=1.
\end{equation*}
This completes the proof of Claim B.
\end{proof}

Next we shall show $m<0$.
By the assumption of the theorem, for $l\gg 0$,
\begin{equation*}
    K_S+lB\sim (lm+1)K_S+l\Gamma
\end{equation*}
is effective. Suppose $m\ge 0$, then
\begin{equation*}
    H^0(\sshf{S}(K_S+lB))\iso H^0(\sshf{S}(l\Gamma -(lm+1)E))=0,
\end{equation*}
because the linear system $|l\Gamma|$ does not move according to Claim B. This contradiction forces that $m<0$.

Finally we take $C=(-m)E+\Gamma$. Recall since $E\in|-K_S|$ is general, $E\cap \Gamma=\emptyset$. We have that
\begin{equation*}
    H^1(\sshf{S}(B-C))\iso H^1(\sshf{S})=0,
\end{equation*}
so the surjective of $\alpha$ follows from Lemma \ref{surjectiveness of multiplication map2}. As for that of $\beta$, we apply \cite[Prop. 5.4]{RS16} for $(-m)E$ and Lemma \ref{surjectiveness of multiplication map} for $\Gamma$, respectively.
\end{proof}

\bibliography{Ontheprojectivenormalityofcycliccoveringsoverarationalsurface}{}
\bibliographystyle{plain}

\end{document}